\newtheorem{thm}{Theorem}[section]
\newtheorem{cor}[thm]{Corollary}
\newtheorem{lem}[thm]{Lemma}
\newtheorem{prop}[thm]{Proposition}
\newtheorem{exa}[thm]{Example}
\newtheorem{defi}[thm]{Definition}
\newtheorem{rek}[thm]{Remark}
\theoremstyle{definition}
\newcommand{\HP}{\mathrm{HP}}
\newcommand{\NP}{\mathrm{NP}}
\newcommand{\GNP}{\mathrm{GNP}}
\newcommand{\Tr}{\mathrm{Tr}}
\newcommand{\ord}{\mathrm{ord}}
\renewcommand{\bar}{\overline}
\newcommand{\A}{\mathbb{A}}
\newcommand{\C}{\mathbb{C}}
\newcommand{\F}{\mathbb{F}}
\newcommand{\R}{\mathbb{R}}
\newcommand{\Q}{\mathbb{Q}}
\newcommand{\Z}{\mathbb{Z}}
\newcommand{\NN}{\mathbb{N}}
\newcommand{\cM}{\mathcal{M}}
\begin{document}
\title[Newton polygons for a variant of the Kloosterman family]
{Newton polygons for a variant of the Kloosterman family}

\author[Bellovin]{Rebecca Bellovin}
\address{
Department of Mathematics,
450 Serra Mall, Bldg. 380,
Stanford, CA 94305
}\email{rmbellov@math.stanford.edu}

\author[Garthwaite]{Sharon Anne Garthwaite}
\address{Department of Mathematics, Bucknell University, Lewisburg, PA 17837}\email{sharon.garthwaite@bucknell.edu}

\author[Ozman]{Ekin Ozman}
\address{
University of Texas-Austin,
Department of Mathematics,
Austin-TX, 78712
}
\email{ozman@math.utexas.edu}

\author[Pries]{Rachel Pries}
\address{
Department of Mathematics,
Colorado State University,
Fort Collins, CO 80523-1874, USA
}
\email{pries@math.colostate.edu}

\author[Williams]{Cassandra Williams}
\address{
Department of Mathematics and Statistics, James Madison University, Harrisonburg, VA 22802, USA
}
\email{willi5cl@jmu.edu}

\author[Zhu]{Hui June Zhu}
\address{Department of Mathematics, State University of New York, Buffalo, NY 14260}
\email{hjzhu@math.buffalo.edu}

\date{\today}

\thanks{This project was initiated at the workshop WIN Women in Numbers in November 2011.  The authors would like to thank the Banff International Research Station for hosting the workshop and the Fields Institute, the Pacific Institute for the Mathematical Sciences, Microsoft Research, and University of Calgary for their financial support.  Author Bellovin was supported by an NSF Graduate Research Fellowship.  Author Pries was partially supported by
NSF grant DMS-11-01712. Author Zhu was partially supported by NSA grant 1094132-1-57192. 
 We would like to thank the referee for suggestions which improved Section \ref{Snp}. }
\keywords{Exponential sum, L-function, Newton polygon, Hodge polygon, Artin-Schreier variety}
\subjclass{11T24}

\newpage

\maketitle

\begin{abstract}
We study the $p$-adic valuations of roots of $L$-functions associated with certain families of 
exponential sums of Laurent polynomials $f \in \F_q[x_1^{\pm 1}, \ldots, x_n^{\pm 1}]$.
The families we consider are reflection and Kloosterman variants of diagonal polynomials.
Using decomposition theorems of Wan, we determine the Newton and Hodge polygons of a
non-degenerate Laurent polynomial in one of these families.
\end{abstract}

\section{Introduction}
Let $q$ be a power of a prime $p$ and $\F_q$ be the finite field of $q$ elements.  Let $\zeta_p \in \C$ be a fixed primitive $p$th root of unity.
For $k \in \NN$, consider the trace homomorphism ${\rm Tr}_k: \F_{q^k} \to \F_p$.
Given a Laurent polynomial $f(x_1,\ldots, x_n) \in \F_q[x_1^{\pm 1},\ldots,x_n^{\pm 1}]$, 
its $k$-th exponential sum is
\[S_k^*(f)=\sum\limits_{x_i \in {\mathbb F}^*_{q^k}} \zeta_p^{\Tr_kf(x_1,\ldots, x_n)}\in \Q(\zeta_p).\]
The $L$-function of the exponential sum of $f$ is defined as 
\[L^*(f,T)=\exp \left( \sum\limits_{k=1}^\infty S^*_k(f) \frac{T^k}{k}\right).\]
A theorem of Dwork-Bombieri-Grothendieck states that 
\[L^*(f,T)=\frac{\prod_{i=1}^{d_1}(1-\alpha_iT)}{\prod_{j=1}^{d_2}(1-\beta_jT)},\]
where $\alpha_i, \beta_j$ are non-zero algebraic integers for $1 \leq i \leq d_1$ and $1 \leq j \leq d_2$.
Thus \[S_k^*(f)=\beta_1^k + \cdots + \beta_{d_2}^k - \alpha_1^k - \cdots - \alpha_{d_1}^k.\]
The values $d_1$ and $d_2$ depend on geometric and cohomological properties of the motive defined by $f$.
A theorem of Deligne \cite{Deligne} implies that the complex absolute values satisfy 
$|\alpha_i|=q^{u_i/2}$ and $|\beta_j|=q^{v_j/2}$ for some weights $u_i, v_j \in \Z \cap [0,2n]$.  
Also, for each prime $\ell \neq p$, the values $\alpha_i, \beta_j$ are $\ell$-adic units.

There are many open questions about the $p$-adic valuation of the roots and poles of $L^*(f, T)$.
Write $|\alpha_i|_p=q^{-r_i}, |\beta_j|_p=q^{-s_j}$, where the $p$-adic valuation is normalized such that $|q|_p=1/q$.  
Deligne's integrality theorem implies that $r_i,s_j \in \Q \cap [0,n]$.
If $f$ is diagonal, then $\alpha_i, \beta_j$ are roots of products of Gauss sums and the slopes $r_i, s_j$ can be determined using Stickelberger's theorem.
In this paper, we use Wan's decomposition theory \cite{Wan} to study two families of Laurent polynomials that are not diagonal.
We briefly explain the results, referring to Section \ref{Snot} for definitions and background material.

Given a Laurent polynomial $f$, one can define its Newton polytope $\Delta$ 
which is an $n$-dimensional integral convex polyhedron in $\R^n$ determined by the dominant terms of $f$.  
Using $\Delta$, one can define a non-degeneracy condition on $f$.
Also, one can assign a weight function to lattice points of $\R^n$. 
One can associate to $\Delta$ its Hodge numbers and Hodge polygon $\HP(\Delta)$,
a lower convex polygon in $\R^2$ starting at the origin, by counting the number of lattice points of a given weight.

If $f$ is non-degenerate and $\Delta$ is general enough, 
then $L^*(f, T)^{(-1)^{n-1}}$ is a polynomial of degree $n!V(\Delta)$ by results of Adolphson and Sperber \cite{AS}.  
In this case, information about the $p$-adic valuations of the roots of $L^*(f, T)^{(-1)^{n-1}}$ is encapsulated in the Newton polygon $\NP(f)$, 
another lower convex polygon in $\R^2$ starting at the origin.
Grothendieck's specialization theorem implies that there exists a generic Newton polygon 
$\GNP(\Delta,\bar\F_p):=\inf_{f} \NP(f)$ where $f$ ranges over all non-degenerate
Laurent polynomials over $\bar\F_p$ with Newton polytope $\Delta$. 
If $f$ is nondegenerate and ${\rm dim}(\Delta)=n$, then by \cite{AS}, the endpoints of the three polygons meet and
\[\NP(f) \geq \GNP(\Delta, \bar\F_p)\geq \HP(\Delta).\]

There are important theorems and open questions about when $\NP(f) = \HP(\Delta)$ or $\GNP(\Delta, \bar\F_p) = \HP(\Delta)$, 
e.g., \cite{AS}, \cite{Wan93}.
In this paper, we consider two families of Laurent polynomials $f$ that are deformations of diagonal polynomials.
In Section \ref{Snp}, we apply Wan's decomposition theory \cite{Wan} to determine congruence conditions on $p$ for which 
$\NP(f) = \HP(\Delta)$. 
In Section \ref{HodgeNumb}, we compute the Hodge numbers of $\HP(f)$ under certain numeric restrictions.     

Here are the two families we consider.  Fix $\vec{m}=(m_1, \ldots, m_n) \in \NN^n$ and let $f_{n, \vec{m}} = x_1^{m_1}+ \cdots + x_n^{m_n}$.
For $1 \leq j \leq n$, define
\[G_{n, \vec{m}}^j=f_{n,\vec{m}} +x_1^{-m_1}+ \cdots + x_j^{-m_j},\]
and 
\[K_{n,\vec{m}}^j=f_{n,\vec{m}} + (x_1 \cdots x_j)^{-1}.\]

An effective lower bound for the Newton polygon for $\NP(G_{n,m}^j)$ is given by Hodge-Stickelberger polygon
as described in \cite[Theorem 6.4]{BFZ08}, see also further results in \cite{Blache}.
We say that $f \in \F_q[x_1^{\pm 1},\ldots,x_n^{\pm 1}]$ is a {\it reflection variant} of $f_{n,\vec{m}}$ if $\Delta(f)=\Delta(G_{n, \vec{m}}^j)$ for some $1 \leq j \leq n$.
We say that $f \in \F_q[x_1^{\pm 1},\ldots,x_n^{\pm 1}]$ is a {\it Kloosterman variant} of $f_{n,\vec{m}}$ if $\Delta(f)=\Delta(K_{n, \vec{m}}^j)$ for some $1 \leq j \leq n$.

Our motivation to study this problem came from the information that it yields about Newton polygons of varieties defined over $\F_q$.
Consider the affine toric Artin-Schreier variety $V_f$ in $\mathbb{A}^{n+1}$ defined by the affine equation
$y^p-y=f(x_1,\ldots x_n)$ where $f(x_1,\ldots,x_n) \in \F_q[x_1^{\pm 1}, \ldots, x_n^{\pm 1}]$ as above. 
The $p$-adic Newton polygons of $L(f/{\mathbb F}_q, T)$ and $L(V_f/{\mathbb F}_q, T)$ 
are the same after scaling by a factor of $p-1$, denoted by $\NP(V_f)=(p-1){\NP}(f)$.

Further decomposition methods for Newton polygons are developed in \cite{Le09}.  Other related work can be found in \cite{H2}, \cite{H1}. 

\section{Background material} \label{Snot} 

Consider a Laurent polynomial $f \in \F_q[x_1^{\pm 1}, \ldots, x_n^{\pm 1}]$.  Then $f$ is of the form
$f = \sum_{j=1}^{J}a_j\vec{x}^{V_j}$ where $a_j \neq 0$, $V_j=(v_{1,j},\ldots,v_{n,j}) \in \Z^n$, and $\vec{x}^{V_j}:=x_1^{v_{1,j}} \ldots x_n^{v_{n,j}}$ for $1 \leq j \leq J$.

\subsection{The Hodge polygon}

The {\em Newton polytope} $\Delta(f)$ of $f$ is the convex polygon generated by the origin $\vec{0}$ and the lattice points $V_j$.
Note that $\Delta$ is an integral polytope, namely its vertices have integral coordinates.
Without loss of generality, we assume that ${\rm dim}(\Delta)=n$.  Let $V(\Delta)$ denote the volume of $\Delta$.
If $\delta$ is a subset of $\Delta(f)$, let $f^{\delta}=\sum\limits_{V_j \in \delta} a_jx^{V_j}$. 

\begin{defi}
A Laurent polynomial $f$ is {\it non-degenerate} with respect to $\Delta$ and $p$
if for each closed face $\delta$ of $\Delta(f)$ not containing $\vec{0}$, 
the partial derivatives $\{\frac{\partial f^{\delta}}{\partial x_1}, \cdots,\frac{\partial f^{\delta}}{\partial x_n}\}$ have no common zeros with $x_1\cdots x_n \neq 0$ over $\bar{\F}_q$.
\end{defi}

Let $\A(\Delta)$ denote the space of all Laurent polynomials with Newton polytope $\Delta$,
parametrized by their (non-vertex) coefficients $(a_j)$.  It is a smooth irreducible affine variety defined over $\F_p$.
The subspace $\cM_p(\Delta) \subset \A(\Delta)$ of all nondegenerate Laurent polynomials
is the complement of a discriminant locus in $\A(\Delta)$.
It is known that $\cM_p(\Delta)$ is Zariski dense and open in $\A(\Delta)$ for each prime $p$;
in other words, a generic Laurent polynomial with Newton polytope $\Delta$ is non-degenerate.
We assume throughout that $f \in \cM_p(\Delta)$.

\begin{defi}
\begin{enumerate}
\item The {\it cone} $C(\Delta)=\sum_{v\in\Delta}v\R^{\geq 0}$ of $\Delta$ is the monoid generated by vectors in $\Delta$.
\item If $\delta$ is a codimension one face of $\Delta$, with equation $\sum_{i=1}^n c_i x_i=1$ for $c_i\in\Q$, the {\it denominator} $D(\delta)$ 
is ${\rm min}\{d \mid dc_i \in \Z, \ 1 \leq i \leq n\}$.
\item The {\it denominator} $D(\Delta)$ is the least common multiple of $D(\delta)$ for all codimension one faces $\delta$ of $\Delta$ not containing $\vec{0}$.
\item If $u=(u_1, \ldots, u_n) \in \Q^n$, the {\it weight} $w(u)$ is the smallest $c \in \Q^{\geq 0}$ such that $u \in c\Delta:=\{c\vec{x} \mid \vec{x} \in \Delta\}$.
(If there is no such rational number $c$, then $w(u)=\infty$).
\end{enumerate}
\end{defi}

The weight $w(u)$ is finite if and only if $u \in C(\Delta)$.  Here is an equivalent way to define the weight.  
If $u \in C(\Delta)$, then the ray $u \R^{\geq 0}$ intersects a codimension one face of $\Delta$ not containing $\vec{0}$.  
If $\sum_{i=1}^n c_i x_i=1$ is the equation of $\delta$, then $w(u)=\sum_{i=1}^{n} c_iu_i$.
Thus $w(u) \in \frac{1}{D(\delta)}\Z^{\geq 0}$. 

We now define the Hodge numbers by counting the number of lattice points of a given weight $k/D$.

\begin{defi} \label{DHodgenum} If $k \in \Z^{\geq 0}$, 
\begin{enumerate}
\item let $W_\Delta(k)=\#\{u \in \Z^n \mid w(u)=\frac{k}{D(\Delta)}\}$ be the number of lattice points in $\Z^n$ with weight $k/D(\Delta)$. 
\item let $H_\Delta(k)=\sum_{i=0}^{n}(-1)^i\dbinom{n}{i}W_\Delta(k-iD(\Delta))$ (the Hodge number).
\end{enumerate}
\end{defi}

For example, when $n=2$, $H_\Delta(k)=W_\Delta(k)-2W_\Delta(k-m)+W_\Delta(k-2m)$.
The Hodge number $H_\Delta(k)$ is the number of lattice points of weight $k/D(\Delta)$ in a fundamental domain of $\Delta$ 
which corresponds to a basis of the p-adic cohomology used to compute the $L$-polynomial.  
Therefore, $H_\Delta(k) \geq 0$ if $k \geq 0$ and $H_\Delta(k)=0$ if $k>nD(\Delta)$. 
Furthermore, \[\sum_{k=0}^{nD(\Delta)}H_\Delta(k)=n!V(\Delta).\]

\begin{defi}\label{HPvertex}
The Hodge polygon $\HP(\Delta)$ is the lower convex polygon in $\R^2$ that starts at $\vec{0}$ and has a side of slope $k/D$ with horizontal length
$H_{\Delta}(k)$ for $0 \leq k \leq nD$.
In other words, it is the polygon with vertices at the origin and, for $0 \leq j \leq nD$, at the point
\[\left(\sum_{k=0}^{j}H_\Delta(k), \frac{1}{D(\Delta)}\sum_{k=0}^{j}kH_\Delta(k)\right).\]
\end{defi}

\subsection{Newton polygon}
When $f$ is nondegenerate with respect to $\Delta(f)$, then $L^*(f,T)^{(-1)^{n-1}}$ is a polynomial of degree $N=n!V(\Delta)$ \cite[Corollary~3.14]{AS}.
Write $L^*(f,T)^{(-1)^{n-1}}=1+C_1T+\cdots+C_NT^N$ with $C_i \in \Z[\zeta_p]$.
For $C \in \Z[\zeta_p]$, write ${\rm ord}_q(C) = {\rm ord}_p(C)/{\rm log}_p(q)$ where $|C|_p = p^{-{\rm ord}_p(C)}$.
The $p$-adic {\em Newton polygon} $\NP(f)$ of $f$ is the lower convex hull in $\R^2$ of the points $(i,\ord_q(C_i))$ for $0 \leq i \leq N$.
The Newton polygon $\NP(f)$ has a segment with slope $\alpha$ and horizontal length $\ell_\alpha$ 
if and only if $L^*(f,T)^{(-1)^{n-1}}$ has a root of $p$-adic valuation $r_i=\alpha$ with multiplicity $\ell_\alpha$.
Results about the slopes of the Newton polygon of $f$ yield results about the 
$p$-adic Riemann hypothesis on the distribution of the roots of $L^*(f,T)^{(-1)^{n-1}}$ in $\bar\Q_p$.

By Grothendieck's specialization theorem, for each prime $p$, there exists
a generic Newton polygon $\GNP(\Delta,\bar\F_p):=\inf_{f} \NP(f)$ where $f$ ranges over all $f \in \cM_p(\Delta)$ defined over $\bar\F_p$. 

\begin{thm}\label{T:AS} 
\cite[Corollary 3.11]{AS}
If $p$ is prime and if $f \in \cM_p(\Delta)$, then the endpoints of the three polygons meet and
\[\NP(f)\geq \GNP(\Delta;\bar\F_p)\geq \HP(\Delta).\]
\end{thm}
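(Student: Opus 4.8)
The plan is to establish the two inequalities separately and then verify that all three polygons share the same pair of endpoints. Following \cite{AS}, the left inequality is nearly formal, while the right inequality, the Hodge bound, carries the analytic content.

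Once one knows (as recorded above, via Grothendieck's specialization theorem) that $\GNP(\Delta,\bar\F_p)=\inf_f \NP(f)$ is a well-defined polygon, the inequality $\NP(f)\geq \GNP(\Delta,\bar\F_p)$ for every $f\in\cM_p(\Delta)$ is immediate from the definition of the infimum. So the real work is to prove $\GNP(\Delta,\bar\F_p)\geq \HP(\Delta)$, and since this is to hold for all $f$, it suffices to show $\NP(f)\geq \HP(\Delta)$ for each fixed nondegenerate $f$.

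For this I would use Dwork's $p$-adic (rather than $\ell$-adic) cohomology. Fix a uniformizer $\pi$ with $\ord_p\pi = 1/(p-1)$ and the Artin--Hasse--Dwork splitting function $\theta(t)=\exp(\pi t-\pi t^p)$, whose Taylor coefficients have controlled growth. One builds a completely continuous Dwork--Frobenius operator $\alpha$ on a $p$-adic Banach space with formal orthonormal basis indexed by the monomials $\vec{x}^u$ for $u\in C(\Delta)\cap\Z^n$, each rescaled by a power of a uniformizer that records the weight $w(u)$. The crucial estimate, which is the heart of the matter, is that for $q=p^a$ the matrix of $\alpha^a$ in this basis has every entry of the column indexed by $v$ of $q$-adic valuation at least $w(v)$; this rests on the superadditivity $w(u+v)\leq w(u)+w(v)$, the linearity of $w$ on the cone over each face of $\Delta$, and the growth estimate for $\theta$. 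By \cite[Corollary~3.14]{AS}, for nondegenerate $f$ the Dwork--Koszul complex is exact off the top degree and $L^*(f,T)^{(-1)^{n-1}}$, of degree $N=n!V(\Delta)$, is the Fredholm determinant of Frobenius on the top cohomology $H^n$; expressing $L^*$ through the full complex writes it as an alternating product of Fredholm determinants over spaces carrying $\binom{n}{i}$-fold monomial bases shifted in weight by $iD(\Delta)$, and it is exactly this alternating structure that converts the weight counts $W_\Delta(k)$ into the Hodge numbers $H_\Delta(k)$ of Definition \ref{DHodgenum}. Combining the column-valuation estimate with the standard lemma that the Newton polygon of $\det(I-TM)$ lies on or above the polygon whose slope multiset is given by the column valuations of $M$ then yields $\NP(f)\geq \HP(\Delta)$, and taking the infimum over $f$ gives $\GNP(\Delta,\bar\F_p)\geq \HP(\Delta)$.

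It remains to see that the endpoints coincide. All three polygons start at $\vec{0}$. By Definition \ref{HPvertex} the Hodge polygon ends at horizontal coordinate $\sum_k H_\Delta(k)=n!V(\Delta)=N$, which is also $\deg L^*(f,T)^{(-1)^{n-1}}$, so $\NP(f)$ has the same horizontal extent and $\GNP(\Delta,\bar\F_p)$ is squeezed between the two. To match the terminal heights one must show $\ord_q(C_N)=\tfrac{1}{D(\Delta)}\sum_k kH_\Delta(k)$: one inequality is the column-valuation estimate applied to $\det(\alpha^a\mid H^n)$, and the reverse follows by applying the same estimate to the contragredient construction, equivalently from the fact that the top-weight block of the Frobenius matrix is a unit times a unipotent matrix. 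I expect the main obstacle to be precisely this column-valuation estimate together with its survival under the passage from the infinite-dimensional Dwork complex to the finite-dimensional $H^n$ --- the step where exactness of the complex in lower degrees, and hence the nondegeneracy hypothesis $f\in\cM_p(\Delta)$, is indispensable.
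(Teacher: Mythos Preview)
The paper does not prove this statement at all: Theorem~\ref{T:AS} is simply quoted from \cite[Corollary~3.11]{AS} without argument, and the surrounding discussion (the overconvergent $\sigma$-module $\mathcal{E}(\Delta)$, Grothendieck specialization) only explains why $\GNP$ exists and is attained generically. So there is no ``paper's own proof'' to compare against.

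That said, your sketch is a faithful outline of the Adolphson--Sperber argument that the paper is citing. The first inequality is indeed formal from the definition of $\GNP$ as an infimum, and the second is the Hodge bound of \cite{AS}, proved via the Dwork splitting function, the weighted Banach space indexed by $C(\Delta)\cap\Z^n$, the column-valuation estimate for the Frobenius matrix, and the Koszul-complex expression for $L^*(f,T)^{(-1)^{n-1}}$ that turns the $W_\Delta(k)$ into the $H_\Delta(k)$. Your identification of the nondegeneracy hypothesis as what forces exactness of the Dwork--Koszul complex off the top degree is exactly right. The only point where your sketch is a bit loose is the endpoint matching: in \cite{AS} this comes from a Poincar\'e-type duality on the cohomology (the symmetry $H_\Delta(k)=H_\Delta(nD-k)$ together with the functional equation for the Frobenius determinant), rather than from a direct ``contragredient'' estimate, but the content is the same.
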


It is natural to ask what the slopes of $\GNP(\Delta,p)$ are and how they vary with $p$.  
In particular, it is natural to ask for which $\Delta$ and $p$ the generic Newton polygon equals the Hodge polygon.
Also, one would like to understand when the Newton polygon of $f$ equals the Hodge polygon.
In this context, Wan proved:

\begin{thm} \cite[Theorem 3]{Wan93}
There is a computable integer $D^*(\Delta) \equiv 0 \bmod D(\Delta)$ such that if $p \equiv 1 \bmod D^*(\Delta)$
then $\GNP(\Delta,\bar\F_p)=\HP(\Delta)$.
\end{thm}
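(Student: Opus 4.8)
The plan is to pass to Dwork's $p$-adic analytic description of the $L$-function and then reduce the statement to the non-vanishing of a single polynomial --- a ``Hasse polynomial'' --- in the coefficients of $f$, which can be forced to be nonzero by a Stickelberger-type computation once $p\equiv 1$ modulo a sufficiently large multiple of $D(\Delta)$. First I would set up Dwork cohomology. Fix a nontrivial additive character of $\F_p$, let $\theta$ be the associated Dwork splitting function, and let $\gamma$ be the standard uniformizer with $\ord_p\gamma=1/(p-1)$. For $f=\sum_{j=1}^J a_j\vec x^{V_j}$ with $f$ nondegenerate and $\dim\Delta=n$, the polynomial $L^*(f,T)^{(-1)^{n-1}}$ is cut out, via a Koszul-type complex, from the Fredholm determinant of a completely continuous Frobenius operator $\alpha=\psi_p\circ(\text{multiplication by }\prod_j\theta(a_j\vec x^{V_j}))$ acting on a $p$-adic Banach space with basis $\{\gamma^{w(u)}\vec x^{\,u}: u\in C(\Delta)\cap\Z^n\}$. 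In this basis the matrix $M=M_p(f)$ of $\alpha$ has entries that are $\gamma$-adic power series in the $a_j$ with integral coefficients, and Theorem \ref{T:AS}, which I may invoke, gives $\NP(f)\ge\HP(\Delta)$ with the two polygons sharing both endpoints.

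Next I would reduce ``$\GNP(\Delta,\bar\F_p)=\HP(\Delta)$'' to the statement that a certain Hasse polynomial $H_p(\Delta)\in\F_p[(a_j)]$ is not identically zero. Since $\NP(f)$ and $\HP(\Delta)$ have the same endpoints and $\NP(f)\ge\HP(\Delta)$, convexity forces $\NP(f)=\HP(\Delta)$ for a given $f$ if and only if $\NP(f)$ passes through every vertex of $\HP(\Delta)$, i.e. if and only if, for each vertex abscissa $b$, the coefficient $C_b$ of $L^*(f,T)^{(-1)^{n-1}}$ satisfies $\ord_q(C_b)=\HP(\Delta)(b)$. Each $C_b$ is a universal $\Z$-polynomial in the entries of $M$ (a combination of its principal minors), so the sum of its terms of minimal $\gamma$-order, divided by the expected power of $p$ and reduced modulo the maximal ideal, is a polynomial in $(a_j)$ over $\F_p$; let $H_p(\Delta)$ be the product of these polynomials over all Hodge vertices $b$. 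Then $\NP(f)=\HP(\Delta)$ precisely when $H_p(\Delta)(f)\ne 0$, and since $\cM_p(\Delta)$ is irreducible (stated in Section \ref{Snot}), $\GNP(\Delta,\bar\F_p)=\HP(\Delta)$ if and only if $H_p(\Delta)\not\equiv 0$.

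The crux is then to produce a computable $D^*(\Delta)$, divisible by $D(\Delta)$, for which $p\equiv 1\bmod D^*(\Delta)$ forces $H_p(\Delta)\not\equiv 0$. I would take $D^*$ large enough to simultaneously clear the denominators $D(\delta)$ of all codimension-one faces $\delta$ of $\Delta$ not containing $\vec 0$, make the relevant Teichm\"uller units behave as $D^*$-th roots of unity, and control the finitely many lattice-point decomposition patterns occurring below a fixed weight. Under $p\equiv 1\bmod D^*$, the principal (lowest $\gamma$-order) part of the $(u,u')$-entry of $M$ becomes, up to a Teichm\"uller unit and an explicit power of $p$, a single monomial $\prod_j a_j^{\,b_j}$ arising from a minimal-weight solution of $pu-u'=\sum_j b_j V_j$ with $b_j\in\Z^{\ge 0}$. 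One then checks that the resulting principal matrix is block-triangular with respect to the weight grading, with diagonal blocks indexed by the weight levels $k/D(\Delta)$, and that the determinant of the $k$-th block is a nonzero Laurent polynomial in the $a_j$ --- this last point reduces, by specializing to the diagonal (resp. Kloosterman) configuration, to Stickelberger's theorem for Gauss sums exactly as in the classical diagonal case. Multiplying the nonzero block determinants shows $H_p(\Delta)\not\equiv 0$, which finishes the argument.

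The hardest part will be this last step: pinning down an explicit $D^*(\Delta)$ that works uniformly for all such $p$, and proving that each weight-graded diagonal block is generically invertible. The $\gamma$-adic valuation estimates and the block-triangular structure are fairly robust, but the non-vanishing of the diagonal minors is where all the arithmetic input --- Gauss sums, Stickelberger's congruences, and the lattice-point combinatorics of $\Delta$ --- is concentrated, and bounding how large $D^*$ must be to guarantee that non-vanishing is precisely the ``computable integer'' assertion of the theorem.
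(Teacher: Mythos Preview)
The paper does not prove this statement; it is quoted verbatim as \cite[Theorem~3]{Wan93} and used as background. There is therefore no proof in the paper to compare your proposal against. Your outline is, in broad strokes, the strategy of Wan's original paper: set up the Dwork/Adolphson--Sperber $p$-adic cohomology, express the question ``does $\NP(f)$ hit each Hodge vertex?'' as the nonvanishing of a Hasse-type polynomial in the coefficients of $f$, and then use a congruence condition on $p$ to force the leading terms of the Frobenius matrix to be computable by a Stickelberger argument so that the Hasse polynomial is not identically zero. That is indeed the shape of Wan's proof, and your identification of the crux (controlling the diagonal blocks in the weight filtration and making $D^*$ explicit) is accurate.

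One caution: your final paragraph says you would verify nonvanishing of the diagonal blocks ``by specializing to the diagonal (resp.\ Kloosterman) configuration.'' For a general polytope $\Delta$ there is no diagonal or Kloosterman Laurent polynomial with that $\Delta$, so this specialization is not available in general; Wan's argument instead works face by face and uses the simplicial/facial decomposition of $\Delta$ together with a direct Gauss-sum computation on each simplex. If you intend to write out the proof rather than cite it, that is the step where your sketch would need to be reworked.
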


A non-degenerate Laurent polynomial $f$ is {\it ordinary} if $\NP(f)=\HP(\Delta(f))$.  
In \cite[Theorem 1.8]{Wan}, Wan gives conditions under which $\NP(f)=\HP(f)$ for all $f \in \cM_p(\Delta)$, 
in other words, for which all non-degenerate $f$ with $\Delta(f)=\Delta$ are ordinary.

The proofs of these results are quite deep.  
Wan constructs an overconvergent $\sigma$-module $\mathcal{E}(\Delta)$ of rank $n!\mathbf{V}(\Delta)$ on $\cM_p(\Delta)$ 
such that the $L$-function of any non-degenerate $f$ with Newton polytope $\Delta$ can be computed on the fiber 
$\mathcal{E}(\Delta)_f$ of $\mathcal{E}(\Delta)$ at the corresponding point of $\cM_p(\Delta)$, i.e., 
\[L^\ast(f,T)^{(-1)^{n-1}} = \det(I-T\text{Frob}_f|\mathcal{E}(\Delta)_f).\]
The Newton polygon of $L^\ast(f,T)^{(-1)^{n-1}}$ can be computed from the ``linear algebra data'' $\mathcal{E}(\Delta)_f$.  
A general theorem shows that for a family of $F$-crystals~\cite{Katz} or $\sigma$-modules~\cite{Wan00}, 
the Newton polygon goes up under specialization.  This implies that there is a Zariski dense and open subspace 
$U\subset\cM_p(\Delta)$ such that for every $f\in U$, the Newton polygon of $L^\ast(f,T)^{(-1)^{n-1}}$ equals $\GNP(\Delta,p)$.  

\section{Newton polygons of non-diagonal Laurent polynomials} \label{Snp}

In this section, we apply Wan's decomposition theory to study two families of non-diagonal Laurent polynomials.
A Laurent polynomial $f$ is {\it diagonal} if it is the sum of $n$ monomials and $n={\rm dim}(\Delta(f))$. 
We first survey some results about the diagonal case from \cite[Section 2]{Wan}.
Suppose $f = \sum_{j=1}^{n}a_j\vec{x}^{V_j}$ where $a_j \neq 0$, $V_j=(v_{1,j},\ldots,v_{n,j}) \in \Z^n$, and $\vec{x}^{V_j}:=x_1^{v_{1,j}} \ldots x_n^{v_{n,j}}$ for $1 \leq j \leq n$.
Let $\Delta=\Delta(f)$ and suppose ${\rm dim}(\Delta)=n$.  
We will need the following definition.

\begin{defi} 
The polytope $\Delta$ is {\it indecomposable} if the $(n-1)$-dimensional face generated by $V_1, \ldots, V_n$ contains no lattice points other than its vertices.
\end{defi}

Linear algebra techniques are useful for studying the Hodge polygon in the diagonal case.
Let $M$ be the non-singular $n \times n$ matrix $M=(V_1, \ldots, V_n)$.  
The Laurent polynomial $f$ is non-degenerate with respect to $\Delta$ and $p$ if and only if $p \nmid {\rm det}(M)$.
Integral lattice points $\vec{u}$ of the fundamental domain 
\[\Gamma=\R V_1 + \cdots + \R V_n \bmod \Z V_1 + \cdots + \Z V_n\]
are in bijection with the set $S(\Delta)$ of solutions $\vec{r}=(r_1, \ldots, r_n)$ of $M \vec{r}^T \equiv 0 \bmod 1$ with $r_j \in \Q \cap [0, 1)$.
This bijection preserves size in that the weight $w(\vec{u})$ equals the norm $|\vec{r}|=\sum_{j=1}^n r_i$.
Now $S(\Delta)$ is a finite abelian group under addition modulo $1$.  Let $D^*$ be its largest invariant factor.
Consider the multiplication-by-$p$ automorphism $[p]$ on $S(\Delta)$, denoted $\vec{r} \to \{p\vec{r}\}$.
The automorphism $[p]$ is weight-preserving if  $p \equiv 1 \bmod D^*$.

Using Gauss sums and the Stickelberger theorem, one proves that the $p$-adic valuation of a root $\alpha$ of $L^*(f/\F_q, T)^{(-1)^{n-1}}$
can be expressed in terms of the average norm of an element $\vec{r} \in S(\Delta)$ under $[p]$ \cite[Corollary 2.3]{Wan}.
Specifically, the horizontal length of the slope $s$ portion of the Newton polygon equals the number of elements $r \in S(\Delta)$ whose average norm is $s$ \cite[Corollary 2.4]{Wan}.
This yields the following.

\begin{thm} \label{Tdiag}  \cite[Section 2.3]{Wan}
Let $\Delta$ be a simplex containing $\vec{0}$ with ${\rm dim}(\Delta)=n$.  Then
\begin{enumerate}
\item 
$\NP(f)=\HP(\Delta)$ for all $f \in \cM_p(\Delta)$ supported only on the interior and vertices of $\Delta$ if  $p \equiv 1 \bmod D^*$.
\item 
$\GNP(\Delta,\bar\F_p)=\HP(\Delta)$ if $p \equiv 1 \bmod D^*$.
\end{enumerate}
\end{thm}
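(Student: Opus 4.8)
The plan is to reduce both assertions to an explicit computation for the diagonal Laurent polynomial supported on the vertices of $\Delta$. Write $\Delta$ as the simplex with vertices $\vec{0}, V_1,\dots,V_n$, let $f_0 = \sum_{j=1}^{n} a_j\vec{x}^{V_j}$ where the $a_j$ are the vertex coefficients of $f$, and set $M=(V_1,\dots,V_n)$. Since $\Delta$ is a simplex containing $\vec{0}$, it has exactly one codimension-one face $\delta$ not containing $\vec{0}$, the one spanned by $V_1,\dots,V_n$; moreover $\delta$ lies in the hyperplane $\{w=1\}$, so it contains no interior lattice point of $\Delta$. Hence if $f\in\cM_p(\Delta)$ is supported only on the interior and vertices of $\Delta$, then $f^{\delta}=f_0$. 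First I would invoke Wan's decomposition theory \cite{Wan}: for a simplex the boundary (facet) decomposition of the overconvergent $\sigma$-module is trivial, carried entirely by the single cone over $\delta$ and by $f^{\delta}$, so the interior terms of $f$ do not affect $\NP(f)$, and $\NP(f)=\NP(f_0)$. I expect this reduction to be the \emph{main obstacle}, since it is the only step drawing on input beyond classical number theory; everything afterward is a computation.

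Next I would compute $\NP(f_0)$. Since $S(\Delta)$ is a finite abelian group of order $|\det M| = n!V(\Delta)$ with exponent $D^*$, every prime divisor of $|\det M|$ divides $D^*$, so the hypothesis $p\equiv 1\bmod D^*$ forces $p\nmid \det M$; hence $f_0$ is non-degenerate, $f_0\in\cM_p(\Delta)$, and $L^*(f_0,T)^{(-1)^{n-1}}$ is a polynomial of degree $N=n!V(\Delta)=|S(\Delta)|$. By the Gauss-sum and Stickelberger analysis of the diagonal case — expand each $k$-th exponential sum of $f_0$ over the monomial map $\vec{x}\mapsto(\vec{x}^{V_1},\dots,\vec{x}^{V_n})$ into multiplicative characters of $\F_{q^k}^*$, producing Gauss sums whose $p$-adic valuations are governed by Stickelberger's theorem — the reciprocal roots of $L^*(f_0,T)^{(-1)^{n-1}}$ are indexed by $\vec{r}\in S(\Delta)$, and the slope-$s$ side of $\NP(f_0)$ has horizontal length $\#\{\vec{r}\in S(\Delta)\text{ with average norm }s\text{ under }[p]\}$ (\cite[Corollaries 2.3 and 2.4]{Wan}). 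Now $p\equiv 1\bmod D^*$ together with $D^*\vec{r}\in\Z^n$ gives $(p-1)\vec{r}\in\Z^n$, so $\{p\vec{r}\}=\vec{r}$: the automorphism $[p]$ is the identity on $S(\Delta)$, and the average norm of $\vec{r}$ is just $|\vec{r}|=w(\vec{u}_{\vec{r}})$ for the corresponding lattice point $\vec{u}_{\vec{r}}$ of $\Gamma$. Thus the slope-$s$ side of $\NP(f_0)$ has horizontal length $\#\{\vec{r}\in S(\Delta): |\vec{r}|=s\}$.

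Finally I would match this against $\HP(\Delta)$. Because $\Delta$ is a simplex with $\vec{0}$ a vertex and $w(V_i)=1$ for each $i$, every lattice point of the cone $C(\Delta)$ is uniquely $\sum_i (r_i+a_i)V_i$ with $\vec{r}\in S(\Delta)$ and $\vec{a}\in(\Z^{\geq 0})^n$, of weight $|\vec{r}|+|\vec{a}|$; a short generating-function identity in the variable tracking weight then collapses the inclusion-exclusion of Definition~\ref{DHodgenum} to $H_\Delta(k)=\#\{\vec{r}\in S(\Delta): |\vec{r}|=k/D(\Delta)\}$. By Definition~\ref{HPvertex}, $\HP(\Delta)$ therefore has exactly the slopes and horizontal lengths found above for $\NP(f_0)$, so $\NP(f_0)=\HP(\Delta)$; together with the first paragraph this proves (1). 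For (2), since $f_0\in\cM_p(\Delta)$, Theorem~\ref{T:AS} gives $\HP(\Delta)=\NP(f_0)\geq\GNP(\Delta,\bar\F_p)\geq\HP(\Delta)$, forcing $\GNP(\Delta,\bar\F_p)=\HP(\Delta)$, which is (2).
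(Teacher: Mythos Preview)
The paper does not prove Theorem~\ref{Tdiag}; it is quoted from \cite[Section~2.3]{Wan}, and the surrounding paragraphs only summarize Wan's method (the bijection $S(\Delta)\leftrightarrow\Gamma$, Gauss sums, Stickelberger, the $[p]$-action) without giving a formal argument. Your second and third paragraphs reconstruct that method accurately: when $p\equiv 1\bmod D^*$ the map $[p]$ is the identity on $S(\Delta)$, the slope-$s$ multiplicity in $\NP(f_0)$ is $\#\{\vec{r}\in S(\Delta):|\vec{r}|=s\}$, and this matches $H_\Delta(k)$ via the generating-function identity. The derivation of (2) from (1) by sandwiching with Theorem~\ref{T:AS} is also fine.

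The first paragraph, however, overshoots. You claim that for a simplex the interior terms of $f$ do not affect the Newton polygon, so that $\NP(f)=\NP(f_0)$ unconditionally. The facial decomposition you invoke (Theorem~\ref{decomp}, i.e.\ \cite[Theorem~8]{Wan93}) does not assert this: it says only that $f$ is \emph{ordinary} if and only if each $f^{\delta_i}$ is ordinary. For primes $p\not\equiv 1\bmod D^*$ one can have $\NP(f)\neq\NP(f_0)$; indeed the generic Newton polygon over $\cM_p(\Delta)$ can lie strictly below $\NP(f_0)$. What the argument actually needs, and what facial decomposition does provide, is the conditional transfer: since the unique face $\delta$ of the simplex not through $\vec{0}$ satisfies $f^{\delta}=f_0$ (your observation that interior lattice points have weight $<1$ and hence miss $\delta$), $f$ is ordinary iff $f_0$ is. Once your Stickelberger computation shows $f_0$ is ordinary for $p\equiv 1\bmod D^*$, this forces $\NP(f)=\HP(\Delta)$ for every such $f$, which is (1). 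So the architecture of your proof is correct; just rephrase the reduction as a transfer of ordinarity rather than an unconditional equality of Newton polygons.
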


For the main result, we need to strengthen Theorem \ref{Tdiag} in a certain case.
Suppose $\vec{m}=(m_1, \ldots, m_n) \in \NN^n$ and $f_{n, \vec{m}} = x_1^{m_1}+ \cdots + x_n^{m_n}$.
Suppose $f$ is a Laurent polynomial such that $\Delta(f)=\Delta(f_{n, \vec{m}})$.
Notice that $f$ is non-degenerate with respect to
$\Delta$ and $p$ if and only if $p \nmid D^*= {\rm LCM}(m_1, \ldots m_n)$.

\begin{lem} \label{Lbasic}
Let $\Delta=\Delta(f_{n,\vec{m}})$ with $f_{n, \vec{m}} = x_1^{m_1}+ \cdots + x_n^{m_n}$.
\begin{enumerate}
\item 
Suppose $f \in \cM_p(\Delta)$ is supported only on the interior and vertices of $\Delta$.
Then $\NP(f)=\HP(\Delta)$ if and only if $p \equiv 1 \bmod D^*$.
\item 
If $m_1,\ldots,m_n$ are pairwise relatively prime, then 
$\GNP(\Delta,\bar\F_p)=\HP(\Delta)$ if and only if $p \equiv 1 \bmod D^*$.
\end{enumerate} 
\end{lem}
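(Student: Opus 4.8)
\emph{Plan.} The plan is to reduce both parts to a single combinatorial statement about the finite abelian group $S(\Delta)$ and the map $[p]\colon \vec r\mapsto\{p\vec r\}$, and then invoke the relevant parts of Wan's diagonal local theory \cite[Section~2]{Wan}. For $\Delta=\Delta(f_{n,\vec m})$, the simplex with vertices $\vec 0, m_1e_1,\dots,m_ne_n$, the face opposite $\vec 0$ has equation $\sum_i x_i/m_i=1$, so $D(\Delta)=\mathrm{LCM}(m_1,\dots,m_n)=D^{*}$ and $M=\mathrm{diag}(m_1,\dots,m_n)$; hence $S(\Delta)\cong\prod_j\Z/m_j\Z$, with $\vec r=(a_1/m_1,\dots,a_n/m_n)$, norm $|\vec r|=\sum_j a_j/m_j$, and largest invariant factor $D^{*}$. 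Under the weight-preserving bijection between $S(\Delta)$ and the lattice points of the fundamental domain $\Gamma$, one gets $H_\Delta(k)=\#\{\vec r\in S(\Delta): |\vec r|=k/D^{*}\}$, so $\HP(\Delta)$ is the polygon whose multiset of slopes is $\{|\vec r|:\vec r\in S(\Delta)\}$. Write $P_p(\Delta)$ for the polygon whose multiset of slopes is $\{\,\text{average norm of }\vec r\text{ under }[p]\ :\ \vec r\in S(\Delta)\,\}$. The argument proving Theorem~\ref{Tdiag}(1) in fact yields $\NP(f)=P_p(\Delta)$ for every $f\in\cM_p(\Delta)$ supported only on the interior and vertices of $\Delta$, and in particular $P_p(\Delta)\ge\HP(\Delta)$ by Theorem~\ref{T:AS}.

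\emph{The combinatorial core.} I claim $P_p(\Delta)=\HP(\Delta)$ if and only if $p\equiv1\bmod D^{*}$. If $p\equiv1\bmod D^{*}$ then $[p]$ is the identity on $S(\Delta)$, every element has average norm equal to its norm, and the two multisets of slopes coincide. Conversely, suppose they coincide; I show every $[p]$-orbit is norm-homogeneous by peeling off orbits from the top. Let $s^{*}$ be the largest value of $|\vec r|$ on $S(\Delta)$. Since $s^{*}$ appears in the norm multiset it appears in the average-norm multiset, so some orbit $O$ has average norm $s^{*}$; being an average of values $\le s^{*}$, this forces $|\vec r|=s^{*}$ for all $\vec r\in O$. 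Deleting $O$ removes the same number of copies of $s^{*}$ from both multisets, and we iterate on $S(\Delta)\setminus O$ until $S(\Delta)$ is exhausted. A norm-homogeneous orbit gives $|\{p\vec r\}|=|\vec r|$ for every $\vec r$; applying this to the element of $S(\Delta)$ with $j$-th coordinate $1/m_j$ and all others $0$ forces $p\equiv1\bmod m_j$ for every $j$, i.e.\ $p\equiv1\bmod D^{*}$.

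\emph{The two parts.} Part (1) follows at once: for $f$ supported on the interior and vertices, $\NP(f)=P_p(\Delta)$, so $\NP(f)=\HP(\Delta)$ iff $P_p(\Delta)=\HP(\Delta)$ iff $p\equiv1\bmod D^{*}$ (the implication ``$\Leftarrow$'' is also Theorem~\ref{Tdiag}(1); the new content is the converse, supplied by the equality $\NP(f)=P_p(\Delta)$). For part (2), ``$\Leftarrow$'' is Theorem~\ref{Tdiag}(2). For the converse I use that $m_1,\dots,m_n$ are pairwise coprime: clearing denominators in $\sum_i x_i/m_i=1$ and reducing modulo $m_j$ gives $x_j\prod_{k\ne j}m_k\equiv0\bmod m_j$, hence $x_j\equiv0\bmod m_j$, so the face opposite $\vec 0$ contains no lattice points other than the vertices $m_je_j$ — that is, $\Delta$ is indecomposable. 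For an indecomposable simplex with a vertex at $\vec 0$, Wan's decomposition theory identifies the generic Newton polygon with the diagonal polygon, $\GNP(\Delta,\bar\F_p)=P_p(\Delta)$, and so $\GNP(\Delta,\bar\F_p)=\HP(\Delta)$ iff $P_p(\Delta)=\HP(\Delta)$ iff $p\equiv1\bmod D^{*}$.

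\emph{The main obstacle.} The routine point is the identity $H_\Delta(k)=\#\{\vec r:|\vec r|=k/D^{*}\}$, immediate from Definition~\ref{DHodgenum} and the description of a fundamental domain. The substantive obstacle is the input used for part (2): the inequality $\GNP(\Delta,\bar\F_p)\le P_p(\Delta)$ is automatic since a diagonal $f$ lies in $\cM_p(\Delta)$, but the reverse inequality — that the additional interior and boundary monomials permitted in $\cM_p(\Delta)$ cannot push the Newton polygon below the diagonal polygon — has to be extracted from Wan's decomposition theorem, using that the lattice-point-free face opposite $\vec 0$ forces a rigid diagonal block while the faces through $\vec 0$ contribute nonnegatively. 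Making this face-by-face analysis of weights and $\sigma$-module decompositions precise is the technical heart of the proof; the rest is formal.
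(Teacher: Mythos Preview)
Your combinatorial peeling argument for $P_p(\Delta)=\HP(\Delta)\Leftrightarrow p\equiv 1\bmod D^*$ is correct and self-contained. The difference from the paper lies in how you feed into it. For the converse of~(1) you assert that $\NP(f)=P_p(\Delta)$ for \emph{every} $f$ supported on the interior and vertices of $\Delta$; this is strictly stronger than Theorem~\ref{Tdiag}(1), which only draws that conclusion under the hypothesis $p\equiv1\bmod D^*$. You attribute the stronger statement to ``the argument proving Theorem~\ref{Tdiag}(1)'', and while it is indeed true in Wan's framework that interior deformations of a diagonal do not move the Newton polygon, this is a separate input from the theorem you cite and deserves its own reference rather than an appeal to an unseen proof. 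The paper avoids this issue entirely: for the converse in~(1) it applies Wan's boundary decomposition theorem \cite[Section~5]{Wan93} to pass from ``$f$ ordinary'' to ``each one-variable restriction $a_ix_i^{m_i}$ ordinary'', and the latter is equivalent to $p\equiv 1\bmod m_i$---which is precisely the $n=1$ case of your combinatorial lemma, where it is a one-line Stickelberger computation. So the paper reduces to dimension one rather than proving your $n$-dimensional equivalence.

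For part~(2), both you and the paper invoke the equivalence of pairwise coprimality with indecomposability of $\Delta$. The paper then says the result ``follows from part~(1) and Theorem~\ref{Tdiag}'' without further detail; your account is more explicit, naming the required input as $\GNP(\Delta,\bar\F_p)=P_p(\Delta)$ for indecomposable simplices and correctly flagging its justification as the technical heart. In both treatments the underlying mechanism is the same: the face $\delta_0$ opposite $\vec 0$ carries no non-vertex lattice points, so $f^{\delta_0}$ is forced to be diagonal for every $f\in\cM_p(\Delta)$, and part~(1) (together with facial decomposition) then controls ordinariness of $f$.
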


\begin{proof}
\begin{enumerate}
\item The sufficiency statement follows from Theorem \ref{Tdiag}. 
For the other direction, 
if $f$ is ordinary then each boundary restriction $x_i^{m_i}$ is ordinary by Wan's boundary decomposition theorem \cite[Section 5]{Wan93}. 
Hence $p\equiv 1\bmod m_i$ for $ 1 \leq i \leq n$ which implies $p\equiv 1\bmod D^*$.

\item The polytope $\Delta$ is indecomposable if and only if $m_1, \ldots, m_n$ are pairwise relatively 
prime. Then the statement follows from part (1) and Theorem \ref{Tdiag}.
\end{enumerate}
\end{proof}

The facial decomposition theory of Wan allows one to study the Newton polygon of a non-diagonal Laurent polynomial by dividing $\Delta$ into smaller diagonal polytopes.
 
\begin{thm}\label{decomp}\cite[Theorem 8]{Wan93}
Suppose $f$ is non-degenerate and ${\rm dim}(\Delta(f))=n$.  Let $\delta_1,\ldots,\delta_h$ be the codimension $1$ faces of $\Delta(f)$ which do not contain $\vec{0}$.  
Then $f$ is ordinary if and only if $f^{\delta_i}$ is ordinary for each $i$.
\end{thm}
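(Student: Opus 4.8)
Following Wan's decomposition theory, the plan is to deduce the equivalence from the Adolphson--Sperber inequality (Theorem~\ref{T:AS}), applied to $f$ and to each $f^{\delta_i}$, together with a decomposition of the $p$-adic Frobenius structure of $f$ along the fan of cones over $\delta_1,\dots,\delta_h$. For each $i$, set $\Delta_i=\Delta(f^{\delta_i})$, the convex hull of $\vec 0$ and $\delta_i$. Since $\delta_i$ does not contain $\vec 0$, it spans an $n$-dimensional cone $C(\delta_i)$ with apex $\vec 0$; the cones $C(\delta_i)$ cover $C(\Delta)$ and meet only along the lower-dimensional cones over the common faces $\delta_i\cap\delta_j$. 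Hence $\Delta=\bigcup_i\Delta_i$ (with overlaps of measure zero), so $n!V(\Delta)=\sum_i n!V(\Delta_i)$, and the weight function of $\Delta$ restricts on each $C(\delta_i)$ to the weight function of $\Delta_i$. First I would invoke \cite{AS} to fix a monomial basis of the cohomology $H^n(f)$ computing $L^*(f,T)^{(-1)^{n-1}}=\det(I-T\,\mathrm{Frob}_f\mid H^n(f))$, indexed by a fundamental domain for $\Delta$ and ordered by weight; the combinatorial decomposition above then splits this index set, compatibly with weights, into facial pieces (the monomials on the common faces $\delta_i\cap\delta_j$, among them $x^{\vec 0}$, belonging to several pieces at once).

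The key step will be to show that, in such a basis, the Frobenius matrix of $f$ is block upper triangular to leading $p$-adic order for the facial decomposition: a basis monomial supported in $C(\delta_i)$ is sent by $\mathrm{Frob}_f$ to a sum whose $C(\delta_i)$-component is governed by the Dwork operator of $f^{\delta_i}$ alone, while every contribution that uses a monomial of $f$ not lying on $\delta_i$, or that crosses from $C(\delta_i)$ to another cone, strictly raises the weight and hence has strictly larger $p$-adic valuation than the Hodge (weight) filtration predicts. Now the Hodge--Newton gap of a Frobenius module adapted to a filtration is detected by the minimality of the $p$-adic valuations of a finite list of sub-determinants (``generalized Hasse invariants'') extracted from the Frobenius matrix, and these invariants see only the block-diagonal, i.e.\ facial, part. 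It follows that the Hodge--Newton gap of $f$---the area of the region between $\NP(f)$ and $\HP(\Delta)$---is the sum over $i$ of the Hodge--Newton gaps of the $f^{\delta_i}$. Feeding in $\NP\ge\HP$ for $f$ and for each $f^{\delta_i}$ (Theorem~\ref{T:AS}): $f$ is ordinary $\iff$ this total gap vanishes $\iff$ each facial gap vanishes $\iff$ each $f^{\delta_i}$ is ordinary.

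The hard part is making the ``block triangular to leading order'' claim rigorous. One must handle the overlaps of the cones $C(\delta_i)$---the monomials on the common faces, which lie in several blocks---reconcile the denominators $D(\Delta_i)$, each of which divides $D(\Delta)$, so that the facial Hodge data embeds correctly into that of $\Delta$, and, above all, prove the $p$-adic estimates showing that the off-diagonal (cross-facet and non-facial) entries of the Dwork operator really do lie deeper than the diagonal ones---this is where the non-degeneracy of $f$ and Wan's analysis of the Dwork complex \cite{Wan,Wan93} are essential. For the implication ``$f$ ordinary $\Rightarrow$ each $f^{\delta_i}$ ordinary'' one could instead apply Wan's boundary decomposition theorem repeatedly, but the triangularity argument yields both directions at once.
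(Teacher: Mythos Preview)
The paper does not prove this statement: Theorem~\ref{decomp} is quoted from \cite[Theorem~8]{Wan93} as a black box and is then applied (in Lemma~\ref{Lbasic} and Corollary~\ref{CreflectionNewton}) without any argument for why it holds. So there is no ``paper's own proof'' to compare your proposal against.

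That said, your sketch is a fair outline of how Wan's proof actually goes. The facial decomposition of the cone $C(\Delta)$ into the $C(\delta_i)$, the induced partition of the weight-ordered monomial basis of the Dwork cohomology, and the observation that the Frobenius (Dwork) operator is block triangular with respect to this decomposition up to terms of strictly higher weight filtration---these are precisely the ingredients Wan assembles. Your reduction to the additivity of the Hodge--Newton defect over faces, combined with $\NP\ge\HP$ on each piece, is the right endgame. One point to tighten: the Hodge polygon of $\Delta$ is not merely bounded by, but literally equals, the ``join'' (concatenate all slopes and sort) of the $\HP(\Delta_i)$; this is a purely combinatorial fact about the weight function and is what makes the sum-of-gaps formula exact rather than just an inequality. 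You gesture at this with ``the facial Hodge data embeds correctly,'' but it deserves to be stated as a lemma in its own right.

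Where your proposal remains a sketch is exactly where you say it does: the $p$-adic estimate showing that the cross-facet and non-facial contributions to the Dwork operator lie strictly deeper in the weight filtration than the diagonal (facial) blocks. This is not automatic from non-degeneracy alone; in Wan's argument it comes from the explicit shape of the Dwork splitting function and a careful bookkeeping of how multiplication by a monomial $x^{V_j}$ with $V_j\notin\delta_i$ moves a lattice point out of $C(\delta_i)$ and strictly increases its weight relative to the $\delta_i$-grading. Handling the overlaps along the lower-dimensional faces $\delta_i\cap\delta_j$ also requires an inclusion--exclusion that you allude to but do not carry out. None of this is wrong in your outline, but as written it is a plan for a proof rather than a proof.
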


As illustrations of Wan's facial decomposition theory, we study two deformation families of basic diagonal polynomials.

\begin{defi}
Fix $\vec{m}=(m_1, \ldots, m_n) \in \NN^n$ and let $f_{n, \vec{m}} = x_1^{m_1}+ \cdots + x_n^{m_n}$.
A Laurent polynomial $f \in \F_q[x_1^{\pm 1},\ldots,x_n^{\pm 1}]$ is:
\begin{enumerate}
\item a {\it reflection variant} of $f_{n,\vec{m}}$ if $\Delta(f)=\Delta(G_{n, \vec{m}}^j)$ for some $1 \leq j \leq n$ 
where
\[G_{n, \vec{m}}^j=f_{n,\vec{m}} +x_1^{-m_1}+ \cdots + x_j^{-m_j}.\]
\item a {\it Kloosterman variant} of $f_{n,\vec{m}}$ if $\Delta(f)=\Delta(K_{n, \vec{m}}^j)$ for some $1 \leq j \leq n$
where
\[K_{n,\vec{m}}^j=f_{n,\vec{m}} + (x_1 \cdots x_j)^{-1}.\]
\end{enumerate}
\end{defi}

If $n=2$ and $m_1=m_2=1$, then $K_{2, (1,1)}^2$ is the classical Kloosterman polynomial, and it is well-known in this case that the 
Newton polygon has slopes $0$ and $1$ each with multiplicity one.  Pictures and basic facts about the polytopes for $G_{n, \vec{m}}^j$ and $K_{n, \vec{m}}^j$
can be found in Section \ref{HodgeNumb}.  Here is our main result.  

\begin{cor} \label{CreflectionNewton}
Suppose $f$ is a reflection variant or a Kloosterman variant of $f_{n,\vec{m}}$ for some $1 \leq j \leq n$.
Write $\Delta=\Delta(G_{n,\vec{m}}^j)$ or $\Delta=\Delta(K_{n,\vec{m}}^j)$ as appropriate.
\begin{enumerate}
\item Then $f$ is non-degenerate if and only if $p \nmid D^*={\rm LCM}(m_1, \ldots, m_n)$.
\item $\NP(f)=\HP(\Delta)$ for all $f \in \cM_p(\Delta)$ supported only
on the interior and vertices of $\Delta$ 
if and only if $p \equiv 1 \bmod D^*$.
\item If $m_1,\ldots,m_n$ are pairwise relatively prime, 
then $\GNP(\Delta,\bar\F_p)=\HP(\Delta)$ if and only if $p \equiv 1 \bmod D^*$.
\end{enumerate}
\end{cor}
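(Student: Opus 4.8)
The plan is to deduce Corollary~\ref{CreflectionNewton} from Wan's facial decomposition theorem (Theorem~\ref{decomp}) together with Lemma~\ref{Lbasic} and Theorem~\ref{Tdiag}, after a combinatorial analysis of the two polytopes. First I would enumerate the codimension one faces of $\Delta$ that do not contain $\vec 0$, together with the corresponding face restrictions $f^\delta$. For $\Delta = \Delta(G_{n,\vec m}^j)$ these are the $2^j$ simplices $\delta_{\vec\epsilon}$ with vertex set $\{\epsilon_1 m_1 e_1,\dots,\epsilon_j m_j e_j, m_{j+1}e_{j+1},\dots,m_n e_n\}$ for $\vec\epsilon\in\{\pm1\}^j$; for $\Delta = \Delta(K_{n,\vec m}^j)$ they are the simplex $\delta_0$ with vertex set $\{m_1 e_1,\dots,m_n e_n\}$ and, for $1\le k\le j$, the simplex $\delta_k$ obtained from $\delta_0$ by replacing $m_k e_k$ with $w := -(e_1+\cdots+e_j)$. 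In every case $\delta$ is an $(n-1)$-simplex and $\Delta(f^\delta) = \mathrm{conv}(\delta\cup\{\vec 0\})$ is an $n$-simplex containing $\vec 0$, so that Theorem~\ref{Tdiag} applies to $f^\delta$; moreover, when $m_1,\dots,m_n$ are pairwise relatively prime each $\delta$ is indecomposable. I would cite the routine verification of these combinatorial facts, with the accompanying pictures, from Section~\ref{HodgeNumb}.

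The next step is the linear algebra attached to each face. Writing $M_\delta$ for the matrix whose columns are the vertices of $\delta$, one has $S(\Delta(f^\delta)) = \{\vec r\in(\Q\cap[0,1))^n : M_\delta\vec r^{\,T}\equiv\vec 0\bmod1\}$, with $D^*(\Delta(f^\delta))$ its largest invariant factor. For the reflection faces $M_{\delta_{\vec\epsilon}}$ is diagonal with entries $\pm m_l$, so $S(\Delta(f^{\delta_{\vec\epsilon}}))\cong\prod_l\Z/m_l\Z$ and $D^*(\Delta(f^{\delta_{\vec\epsilon}})) = \mathrm{LCM}(m_1,\dots,m_n) = D^*$; the same holds for $\delta_0$ in the Kloosterman case. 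For $\delta_k$ one expands $\det M_{\delta_k}$ along the column $w$, observes that only the diagonal minor survives so that $|\det M_{\delta_k}| = \prod_{l\ne k}m_l$, and checks that $M_{\delta_k}\vec r^{\,T}\equiv\vec 0$ forces $r_k=0$ and $m_l r_l\equiv 0$ for $l\ne k$; hence $S(\Delta(f^{\delta_k}))\cong\prod_{l\ne k}\Z/m_l\Z$ and $D^*(\Delta(f^{\delta_k})) = \mathrm{LCM}_{l\ne k}(m_l)$, a divisor of $D^*$. Thus every face simplex has its $D^*$ dividing $D^*$, with equality for at least one. These same determinant computations give part (1): since $G_{n,\vec m}^j$ and $K_{n,\vec m}^j$ are supported on the vertices of $\Delta$, each of their face restrictions, of every dimension, is diagonal, hence non-degenerate exactly when $p$ does not divide the corresponding subdeterminant, and the conjunction of these conditions is $p\nmid\prod_l m_l$, i.e.\ $p\nmid D^*$; conversely, if $p\mid m_l$ then the one-vertex face $\{m_l e_l\}$, whose restriction $a_l x_l^{m_l}$ has identically vanishing $x_l$-derivative, makes every $f$ with Newton polytope $\Delta$ degenerate. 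So $\cM_p(\Delta)\neq\emptyset$ if and only if $p\nmid D^*$, as part (1) asserts.

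Parts (2) and (3) then follow formally, and throughout I assume $p\nmid D^*$ (which by part (1) is what makes $\cM_p(\Delta)$ and hence $\NP(f)$ meaningful). For part (2): if $p\equiv1\bmod D^*$ and $f$ is supported on the interior and vertices of $\Delta$, then each $f^\delta$ is diagonal and supported on the vertices of $\Delta(f^\delta)$ and satisfies $p\equiv1\bmod D^*(\Delta(f^\delta))$, hence is ordinary by Theorem~\ref{Tdiag}(1), so $f$ is ordinary by Theorem~\ref{decomp}. Conversely, for any such $f$ the restriction $f^{\delta_0}=\sum_l a_l x_l^{m_l}$ is supported on the vertices of $\Delta(f_{n,\vec m})$, so if $f$ is ordinary then so is $f^{\delta_0}$, and Lemma~\ref{Lbasic}(1) gives $p\equiv1\bmod D^*$. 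For part (3), assume $m_1,\dots,m_n$ are pairwise relatively prime, so every $\delta$ is indecomposable and $f^\delta$ is diagonal for \emph{every} $f\in\cM_p(\Delta)$; in particular the Newton polygon of $f^\delta$ depends only on $M_\delta$. If $p\equiv1\bmod D^*$ then $p\equiv1\bmod D^*(\Delta(f^\delta))$ for each face, so every $f^\delta$, hence every $f$, is ordinary and $\GNP(\Delta,\bar\F_p)=\HP(\Delta)$. Conversely, if $\GNP(\Delta,\bar\F_p)=\HP(\Delta)$ then a generic $f$ is ordinary, hence $f^{\delta_0}=\sum_l a_l x_l^{m_l}$ is ordinary, and Lemma~\ref{Lbasic}(1) again gives $p\equiv1\bmod D^*$.

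The step I expect to be the main obstacle is the combinatorics and determinant bookkeeping in the Kloosterman case: showing that the $\delta_k$ above are precisely the codimension one faces of $\Delta(K_{n,\vec m}^j)$ not containing $\vec 0$ (in particular that $\vec 0$ is an interior point when $j=n$ and a non-vertex boundary point when $j<n$, and that no other subset of $\{m_1e_1,\dots,m_ne_n,w\}$ spans a supporting hyperplane), that each $\delta_k$ is indecomposable under the coprimality hypothesis, and then computing $\det M_{\delta_k}$ and the invariant factors of $S(\Delta(f^{\delta_k}))$. Once the faces and their denominators are identified, everything else is a mechanical application of Theorems~\ref{Tdiag} and~\ref{decomp} and Lemma~\ref{Lbasic}.
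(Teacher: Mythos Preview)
Your proposal is correct and follows essentially the same strategy as the paper: reduce to the faces of $\Delta$ via Wan's facial decomposition (Theorem~\ref{decomp}), identify each face simplex with a diagonal situation, compute its invariant factor $D^*(\delta)$, observe that all of these divide $D^*=\mathrm{LCM}(m_1,\dots,m_n)$ with equality for the ``diagonal'' face $\delta_0$, and then read off (1)--(3) from Theorem~\ref{Tdiag} and Lemma~\ref{Lbasic}. Your determinant and $S(\Delta(f^{\delta_k}))$ computations in the Kloosterman case match the paper's identification (via Lemma~\ref{LdetK}) of the fundamental domain with $\prod_{l\neq k}\Z/m_l\Z$.

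The only notable difference is in the reflection case: the paper dispatches all $2^j$ faces at once by noting that the change of variables $x_i\mapsto x_i^{\pm1}$ sends each $\delta_{\vec\epsilon}$ to the single diagonal face of $\Delta(f_{n,\vec m})$, so Lemma~\ref{Lbasic} applies immediately; you instead enumerate the $2^j$ faces explicitly and observe that each has diagonal matrix $\mathrm{diag}(\pm m_1,\dots,\pm m_j,m_{j+1},\dots,m_n)$. Both routes yield the same $D^*$ and the same conclusion, so this is a matter of presentation rather than substance.
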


\begin{proof}
This proof follows essentially from Lemma \ref{Lbasic}.
The proof of each part relies on the decomposition of $\Delta$ into different faces. 
By \cite{Wan93}, one can measure whether $f$ is non-degenerate, whether the generic Newton polygon and the Hodge polygon coincide, 
and whether the Newton polygon and the Hodge polygon coincide by seeing whether these properties are true for  
the restriction $f^\delta$ of $f$ to each face $\delta$ of $\Delta$.
  
For the reflection case, after a change of variables of the form $x_i \mapsto x_i^{\pm 1}$, one can restrict to the face of $f_{n, \vec{m}}=G_{n,\vec{m}}^0$ not containing $\vec{0}$.  
The result then follows from Lemma \ref{Lbasic}.

For the Kloosterman case, there is a unique face not containing $-\vec{1}_j=-\sum_{i=1}^j e_i= (-1, \ldots, -1, 0, \ldots, 0)$.
It is the same face as in Lemma \ref{Lbasic}; in particular, $D^*={\rm LCM}(m_1, \ldots, m_n)$ for this face and its vertices are the only lattice points with integral coordinates on 
this face if and only if $m_1, \ldots, m_n$ are pairwise relatively prime.

There are $j$ other faces of $\Delta$ not containing $\vec{0}$.
We consider the face $\delta$ through $-\vec{1}_j$ and $v_i=m_ie_i$ for $2 \leq i \leq n$.  The argument for the other faces is similar.
By Lemma \ref{LdetK}, $\delta$ is contained in the hyperplane
\[\frac{1}{m_2}x_2 + \cdots \frac{1}{m_n}x_{n} - \frac{m+n-1}{m} x_1=1.\]
The integral lattice points $\vec{u}$ of the fundamental domain 
\[\Gamma=\R(-\vec{1}_j) + \R v_2 + \cdots + \R v_n \bmod \Z(-\vec{1}_j)  + \Z v_2 + \cdots + \Z v_n\]
are the set 
\[\{(0, u_2, \ldots, u_n) \in \Z^n \mid 0 \leq u_i < m_i\}.\]
Thus $\Gamma \simeq \times_{i=2}^n \Z/m_i$
and $D_1^*={\rm LCM}(m_2, \ldots, m_n)$ is the largest invariant factor of $\Gamma$.
The multiplication-by-$p$ map on $\Gamma$ is thus weight-preserving if 
$p \equiv 1 \bmod D_1^*$.
Since $D_1^*$ divides $D^*$, the face $\delta$ places no new constraints on the condition $\GNP(\Delta,\bar\F_p)=\HP(\Delta)$.
Furthermore, if $\delta$ does not contain $\vec{0}$, then there are no lattice points on $\delta$ other than the vertices. 
Thus the face $\delta$ places no new constraints on the condition $\NP(f)=\HP(\Delta)$ for all $f \in \cM_p(\Delta)$.

Conversely, if $f$ is ordinary then its restriction to each face $f^\delta$ is ordinary.
Then $p\equiv 1\bmod D^*$ by Lemma \ref{Lbasic}.
\end{proof}

\begin{rek}
By \cite[Corollary~3.14]{AS}, if $f$ is non-degenerate, then $L^*(f, T)^{(-1)^{n-1}}$ is a polynomial of degree $n! V(\Delta)$.
In the reflection case,
\[V(\Delta(G_{n, \vec{m}}^j))=2^jV(G_{n,\vec{m}}^0)=2^j \prod_{j=1}^n m_j/n!.\]
For the Kloosterman case, write $s_k$ for the $k$th symmetric product in $m_1, \ldots, m_j$.  For example, 
$s_j=\prod_{i=1}^j m_i$.  Then, see Lemma \ref{LdetK},
\[V(\Delta(K_{n, \vec{m}}^j))=\left(s_j + \sum_{i=1}^{j-1} (-1)^i i s_{j-1-i}\right)\prod_{i={j+1}}^n m_i/n!.\]
\end{rek}

\section{Computation of Hodge polygons} \label{HodgeNumb}

In this section, we describe the Hodge polygons for two types of Laurent polynomials: the reflection variants $G_{n,\vec{m}}^j$ in Section \ref{Sref};
and the Kloosterman variants $K_{n,\vec{m}}^j$ in Section \ref{Sklo}.  
Each of these is a generalization of the diagonal case which we review in Section \ref{Sdiag}.
We give explicit formulae for the Hodge numbers under certain numeric restrictions on $\vec{m}$.

Fix $n \in \NN$ and $\vec{m}=(m_1,\ldots, m_n) \in \NN^n$.
Let $v_i=m_i \vec{e}_i$ where $\vec{e}_i$ is the standard basis vector of $\R^n$;
in other words, $v_1=(m_1, 0 \ldots, 0)$, $v_2=(0,m_2, 0, \ldots, 0)$, etc.
Write $\vec{x}=(x_1, \ldots, x_n)$.

\subsection{Diagonal Case} \label{Sdiag}

Recall that a Laurent polynomial $f \in \mathbb{F}_q[x_1^{\pm 1},\ldots,x_n^{\pm 1}]$ is diagonal if it is the sum of $n$ monomials and ${\rm dim}(\Delta(f))=n$.  
If $f$ is diagonal, each reciprocal zero of its $L$-function can be computed using Gauss sums, yielding a theoretical understanding of the Newton Polygon of the diagonal case.
The diagonal case is still interesting, however, since nontrivial combinatorial and arithmetic problems arise in computing the Newton Polygon. 

Let $f=\sum\limits_{j=1}^{n}a_j x^{V_j}$, with $a_j \in \F_q$, be a diagonal, non-degenerate Laurent polynomial. 
Let's recall the definition of Gauss sums. 

\begin{defi} Let $\chi$ be the Teichmuller character of $\F_q^*$. For $0 \leq k \leq q-2$, the Gauss sum $G_k(q)$ over $\F_q$ is defined as: 
\[G_k(q)=-\sum\limits_{a \in \F_q^*}\chi(a)^{-k}\zeta_p^{\Tr(a)}.\]
\end{defi}

Gauss sums satisfy certain interpolation relations which yield formulas for the exponential sums $S_k^*(f)$ \cite[16]{Wan}. 
For example, 
\[S_1^*(f)=\sum\limits_{x_j \in \F_q^*} \zeta_p^{\Tr(f(x))} = \displaystyle(-1)^n\sum\limits_{k_1V_1+\ldots+k_nV_n \equiv 0 \bmod q-1} \prod\limits_{i=1}^n \chi(a_i)^k G_{k_i}(q).\] 
Combining this with the Hasse-Davenport relation, Wan obtains an explicit formula for $L^*(f,T)^{(-1)^{n-1}}$ in \cite[Theorem 2.1]{Wan}. 
By applying Stickelberger's Theorem, it is possible to determine the $p$-adic absolute values of the reciprocal zeros of $L^*(f,T)^{(-1)^{n-1}}$. 
In particular, the Newton Polygon is independent of the coefficients $a_j$ and one can suppose $f=\sum\limits_{j=1}^{n}x^{V_j}$ without loss of generality.


We now restrict to the special case of Laurent polynomials of the form $f_{n,\vec{m}}= \sum_{i=1}^n x_i^{m_i}$. 
The vertices of the polytope $\Delta:=\Delta(f_{n,\vec{m}})$ are $\{v_1, \ldots, v_n, \vec{0}\}$ and the volume is $V(\Delta)=\prod_{j=1}^nm_j/n!$.
The denominator is $D(\Delta)={\rm LCM}(m_1, \ldots, m_n)$.
The numeric restriction in Section \ref{Sdiagequi} is that $m_i=m_j$ for all $1 \leq i,j \leq n$ and in Section \ref{Sdiagn=2} is that $n=2$ and ${\rm gcd}(m_1,m_2)=1$.

\subsubsection{General dimension, equilateral} \label{Sdiagequi}
 
For later use, we review some results about the Hodge numbers of the diagonal polynomials
\[G^0_{n,m}=x_1^m+ \cdots + x_n^m.\]

\begin{lem} \label{DiagWk}
The weight numbers for $G^0_{n,m}$ are:
\[
W(k)=\dbinom{n-1+k}{n-1}.
\]
The Hodge numbers for $G^0_{n,m}$ are:
\[H(k)=\sum\limits_{i=0}^{n}(-1)^i\dbinom{n}{i}\dbinom{n-1+k-im}{n-1}.\]
\end{lem}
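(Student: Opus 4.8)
The plan is to compute the weight numbers $W(k)$ for $G^0_{n,m}$ directly from the definition and then feed the result into the alternating-sum formula from Definition \ref{DHodgenum}. For the polytope $\Delta=\Delta(G^0_{n,m})$, which is the simplex with vertices $\vec{0}, m\vec{e}_1,\ldots,m\vec{e}_n$, the unique codimension-one face not containing $\vec{0}$ lies in the hyperplane $\frac{1}{m}(x_1+\cdots+x_n)=1$. Hence for $u=(u_1,\ldots,u_n)\in C(\Delta)=(\R^{\geq 0})^n$ the weight is $w(u)=\frac{1}{m}\sum_{i=1}^n u_i$, and since $D(\Delta)=m$ here, a lattice point $u\in\Z^n$ satisfies $w(u)=k/D(\Delta)=k/m$ precisely when $u\in(\Z^{\geq 0})^n$ and $u_1+\cdots+u_n=k$.

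First I would count these lattice points: $W(k)=\#\{u\in(\Z^{\geq 0})^n : u_1+\cdots+u_n=k\}$ is the classical stars-and-bars count, which equals $\binom{n-1+k}{n-1}$. This gives the first formula. I should note that $W(k)=0$ for $k<0$, which is consistent with reading $\binom{n-1+k}{n-1}$ as zero when $k<0$, so the convention causes no trouble when the formula is reused.

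Second, I would substitute into the Hodge number definition $H_\Delta(k)=\sum_{i=0}^n(-1)^i\binom{n}{i}W_\Delta(k-iD(\Delta))$ with $D(\Delta)=m$, giving immediately
\[
H(k)=\sum_{i=0}^{n}(-1)^i\binom{n}{i}\binom{n-1+k-im}{n-1},
\]
which is the second formula. No further simplification is claimed in the statement, so the proof ends here; one only needs to remark that terms with $k-im<0$ vanish.

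The only genuine point requiring care — and the one I would single out as the main obstacle, though it is minor — is verifying that the weight function on $\Delta(G^0_{n,m})$ really is $\frac{1}{m}\sum u_i$ on all of $C(\Delta)$ and that $D(\Delta)=m$, so that the index $k$ in the definition of $W_\Delta$ and $H_\Delta$ matches the coordinate-sum $k$ used in the stars-and-bars count. This is a direct consequence of the equivalent description of the weight given after the definition of $D(\Delta)$ in Section \ref{Snot}: the ray through $u$ meets the face $\sum c_i x_i=1$ with $c_i=1/m$, so $w(u)=\sum c_i u_i=\frac{1}{m}\sum u_i$. Once this identification is in place, both formulae are immediate, and the lemma follows.
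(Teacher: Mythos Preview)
Your proposal is correct and follows essentially the same route as the paper: identify the unique face not containing $\vec{0}$ as $\frac{1}{m}\sum x_i=1$, deduce $D(\Delta)=m$ and $w(u)=\frac{1}{m}\sum u_i$ on the cone $(\R^{\geq 0})^n$, count lattice points of weight $k/m$ by stars-and-bars to get $\binom{n-1+k}{n-1}$, and then plug into Definition~\ref{DHodgenum}. The paper's own proof is slightly terser but identical in substance.
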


\begin{proof}
 The face of $\Delta$ not containing $\vec{0}$ is the hyperplane
\[\frac{1}{m}x_1 + \cdots +\frac{1}{m} x_n=1.\]
Thus $D(\Delta)=m$.
The cone $c(\Delta)$ is $\{(a_1, \dots, a_n)\in \R^n \mid a_i \geq 0\}$.
The weight of a vector is given by the formula:
$w(\vec{x})=\frac{1}{m}x_1 + \cdots +\frac{1}{m} x_n$.
The number $W(k)$ of points in $c(\Delta)$ with weight $k/m$ is the number of solutions to
\[
x_1+x_2+\cdots+x_n = k,
\]
which yields the formula for $W(k)$.  The formula for $H(k)$ follows from Definition \ref{DHodgenum}.
\end{proof}

\begin{rek}
The vertices of $\HP(\Delta(G^0_{n,m}))$ are at $(0,0)$ and $(x_j,y_j)$ where
\[x_j = \sum\limits_{i=0}^{\lfloor{j/m}\rfloor}(-1)^i\dbinom{n}{i}\dbinom{n+j-im}{n},\]
and
\[y_j=\frac{1}{m}\sum\limits_{i=0}^{\lfloor{j/m}\rfloor}(-1)^i\dbinom{n}{i} \left(n\cdot\dbinom{n+j-im}{n+1}+im\cdot\dbinom{n+j-im}{n}\right).\]
\end{rek}

\subsubsection{Dimension two, non-equilateral} \label{Sdiagn=2}

Suppose $\vec{m}=(m_1,\ldots, m_n) \in \NN^n$ with $m_1, \ldots m_n \in \NN$ pairwise relatively prime.
Let $W_{n, \vec{m}}^0(k):=W_{\Delta(f_{n, \vec{m}})}(k)$.
Let $M_j:=\prod\limits_{i=1, i\neq j}^{n} m_i$.
Then 
\[W^0_{n, \vec{m}}(k)= \#\{(x_1, \ldots, x_n) \in \NN^n \mid \sum\limits_{i=1}^n M_ix_i=k\}.\]  
These restricted partition functions can be computed using Dedekind sums \cite{Beck}.

Restricting to the case $n=2$, then 
\[W^0_{2, \vec{m}}(k)=\#\{(x_1,x_2) \in \NN^2 \mid m_2x_1+m_1x_2=k \}.\] 
Consider the generating function: 
\[\frac{1}{1-z^{m_1}}\frac{1}{1-z^{m_2}}=\sum\limits_{x_2=0}^{\infty}z^{m_1x_2}\sum\limits_{x_1=0}^{\infty}z^{m_2x_1}=\sum\limits_{k \geq 0}W_{2, \vec{m}}^0(k)z^k.\]
In this case, Popoviciu used partial fractions to give the following formula for $W^0_{2, \vec{m}}(k)$.
For $x \in {\mathbb Q}$, let $\{x\}=\lfloor{x}\rfloor-x$ denote the fractional part of $x$.

\begin{thm} \label{explicitformula}  \cite[Section 1.4]{Beck}
Given $m_1, m_2 \in \NN$ with ${\rm gcd}(m_1,m_2)=1$, let $m_1^{-1}, m_2^{-1} \in \NN$ be such that:
\begin{enumerate}
\item $1 \leq m_1^{-1} < m_2$ and $m_1m_1^{-1} \equiv 1 \bmod m_2$ and\\ 
\item $1 \leq m_2^{-1} < m_1$ and $m_2m_2^{-1} \equiv 1 \bmod m_1$. 
\end{enumerate}
Then \[W_{2, (m_1,m_2)}^0(k)=\frac{k}{m_1m_2}-\left \{ \frac{m_2^{-1}k}{m_1} \right \} - \left \{ \frac{m_1^{-1}k}{m_2} \right \} + 1.\] 
\end{thm}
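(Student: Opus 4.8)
The plan is to count the pairs directly rather than to manipulate the generating function as in \cite{Beck}. Fix $k\geq 0$. Since $\gcd(m_1,m_2)=1$, reducing the equation $m_2x_1+m_1x_2=k$ modulo $m_1$ shows that a pair $(x_1,x_2)\in\NN^2$ is a solution if and only if $x_1\equiv m_2^{-1}k\bmod m_1$, $0\leq x_1\leq k/m_2$, and $x_2=(k-m_2x_1)/m_1$; under these conditions $x_2$ is automatically a non-negative integer. Hence $W^0_{2,(m_1,m_2)}(k)$ is exactly the number of integers in the arithmetic progression $a,\ a+m_1,\ a+2m_1,\dots$ that do not exceed $k/m_2$, where $a\in\{0,1,\dots,m_1-1\}$ denotes the residue of $m_2^{-1}k$ modulo $m_1$.

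The first step is the elementary identity
\[\#\{j\in\Z^{\geq 0}\mid a+jm_1\leq R\}=\left\lfloor\frac{R-a}{m_1}\right\rfloor+1,\]
valid for every real $R\geq 0$ and every $a\in\{0,\dots,m_1-1\}$ — including the case where no solutions exist and $W$ vanishes — precisely because $0\leq a<m_1$ forces $(R-a)/m_1>-1$. Applying this with $R=k/m_2$ and substituting $\lfloor x\rfloor=x-\{x\}$ gives
\[W^0_{2,(m_1,m_2)}(k)=\frac{k}{m_1m_2}-\frac{a}{m_1}-\left\{\frac{k}{m_1m_2}-\frac{a}{m_1}\right\}+1.\]
Since $0\leq a<m_1$ and $a\equiv m_2^{-1}k\bmod m_1$, one has $a/m_1=\{m_2^{-1}k/m_1\}$, which is already the second fractional-part term of the asserted formula.

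What remains is to identify the last fractional part with $\{m_1^{-1}k/m_2\}$, and this is the only place where a genuine (but short) computation is needed. Writing $k-m_2a=m_1\ell$ with $\ell\in\Z$ — legitimate because $m_2a\equiv m_2m_2^{-1}k\equiv k\bmod m_1$ — we get $\frac{k}{m_1m_2}-\frac{a}{m_1}=\frac{\ell}{m_2}$, and reducing $m_1\ell=k-m_2a$ modulo $m_2$ yields $\ell\equiv m_1^{-1}k\bmod m_2$, so $\{\ell/m_2\}=\{m_1^{-1}k/m_2\}$. Substituting into the previous display gives Popoviciu's formula. I expect the only real obstacle to be bookkeeping: keeping the two modular inverses straight together with their prescribed ranges and the normalization of $\{\cdot\}$, and checking the counting identity in the case where no solutions exist; beyond coprimality of $m_1$ and $m_2$, nothing deeper is used.
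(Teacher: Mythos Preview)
Your argument is correct. The paper does not actually prove this theorem; it is quoted from \cite[Section~1.4]{Beck}, and the surrounding text indicates that the source proves it by expanding the generating function $\prod_i (1-z^{m_i})^{-1}$ in partial fractions. Your route is the elementary alternative: parametrize solutions by the residue class of $x_1$ modulo $m_1$, count how many lie in $[0,k/m_2]$, and then recognize the two resulting fractional parts. This direct count avoids any analysis and makes the roles of the two modular inverses transparent; the generating-function approach, by contrast, generalizes more readily to $n\geq 3$ (as the paper notes in the remark following the theorem), where the partial-fraction pieces become Fourier--Dedekind sums.

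One bookkeeping point worth flagging: the paper defines $\{x\}=\lfloor x\rfloor-x$, which is the negative of the usual fractional part and is almost certainly a typo (Popoviciu's formula is false with that sign). You silently use the standard convention $\lfloor x\rfloor=x-\{x\}$, which is the correct choice here; just be aware of the discrepancy if you cross-reference the paper's text.
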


Using Theorem \ref{explicitformula}, one can explicitly compute all Hodge numbers $H^0(k)$ 
for $W^0_{2,(m_1,m_2)}=x_1^{m_1}+x_2^{m_2}$ when ${\rm gcd}(m_1,m_2)=1$.
Note that the sum of the Hodge numbers is
\[\sum\limits_{k=0}^{2m_1m_2} H^0(k)=m_1m_1=2V(\Delta(f_{2,(m_1,m_2)})).\]


\begin{table}[h] 
	\centering
		\begin{tabular}{||c||c|c|c|c||}
\hline
		$k$ & $0, 1, \ldots, m_1m_2-1 $ & $m_1m_2$ & $m_1m_2+t; 0 < t <m_1m_2$ & $2m_1m_2$ \\ \hline
		$H^0(k)$ & $W_{2, (m_1,m_2)}^0(k)$ & $0$ & $1-W_{2, (m_1,m_2)}^0(t)$ & $0$ \\
		\hline
		\end{tabular}
	\caption{Hodge Numbers for $x_1^{m_1}+x_2^{m_2}$ if ${\rm gcd}(m_1,m_2)=1$}
	\label{Tadiag}
\end{table}

\begin{rek}
The method for $n=2$ can be generalized to higher dimensions; complicated formulas for $W^0_{n, \vec{m}}(k)$ can be found in terms of Dedekind sums \cite[Theorem 1.7]{Beck}. 
For instance, when $n=3$ and $m_1,m_2,m_3$ are pairwise relatively prime then
\begin{eqnarray*}
W^0_{3, (m_1,m_2,m_3)}(k) & = & \frac{k^2}{m_1m_2m_3}+\frac{k}{2}\left ( \frac{1}{m_1m_2} + \frac {1}{m_1m_3} +\frac {1}{m_2m_3} \right ) \\
& + & \frac{1}{12}\left ( \frac{3}{m_1} + \frac {3}{m_2} +\frac {3}{m_3} + \frac{m_1}{m_2m_3} + \frac{m_2}{m_1m_3} +\frac{m_3}{m_1m_1} \right ) \\
& + & \varphi_{m_1}(m_2,m_3)(k) + \varphi_{m_2}(m_1,m_3)(k) + \varphi_{m_3}(m_1,m_2)(k),
\end{eqnarray*}
where $\varphi_{a}(b,c)(k):=\frac{1}{c}\sum \limits_{i=1}^{c-1}[(1-\zeta_c^{ib})(1-\zeta_c^{ia})\zeta_c^ik]^{-1}$. 
\end{rek}

\subsection{Reflection variant Laurent polynomials} \label{Sref}

Suppose $\vec{m}=(m_1, \ldots, m_n) \in \NN^n$ and let
\[G_{n, \vec{m}}^0=x_1^{m_1}+\ldots + x_n^{m_n}.\]  
The polytope $\Delta_{n, \vec{m}}^0$ for $G_{n, \vec{m}}^0$ has vertices $\vec{0}$ and $v_i$ for $1 \leq i \leq n$.

We consider reflections of $\Delta_{n, \vec{m}}^0$ across coordinate hyperplanes.  After a permutation of the variables, 
it is no loss of generality to reflect across the hyperplanes $x_i=0$ for $1 \leq i \leq j$.  
Let \[G_{n, \vec{m}}^j=x_1^{m_1}+\ldots+x_n^{m_n}+x_{1}^{-m_1}+\ldots+x_{j}^{-m_j}.\]
Let $\Delta_{n, \vec{m}}^j$ be the polytope of $G_{n, \vec{m}}^j$.
For example, $G_{n, \vec{m}}^1 = x_1^{m_1}+\ldots + x_n^{m_n}+x_1^{-m_1}$ and $\Delta_{n, \vec{m}}^1$ is the polygon in $\R^n$ with vertices $v_i$ for $1 \leq i \leq n$ and $-v_1$.
Then $\Delta_{n, \vec{m}}^j$ has $n+j$ vertices other than $\vec{0}$ and 
\[{\rm Vol}(\Delta_{n,\vec{m}}^j)=2^j\cdot {\rm Vol}(\Delta_{n,\vec{m}}^0) = 2^j\prod_{i=1}^n m_i/n!. \]

Using the inclusion-exclusion principle, there is a recursive formula for the weight numbers of $\Delta_{n, \vec{m}}^j$:
\begin{equation} \label{Erec}
W_{\Delta_{n, \vec{m}}^j}(k)  =  2 W_{\Delta_{n, \vec{m}}^{j-1}}(k)  -  W_{\Delta_{n-1, (m_1,\ldots,\hat{m}_j, \ldots, m_n)}^{j-1}} (k),
\end{equation}
where the notation $\hat{m}_j$ means that the $j$th variable is omitted.
Using this recursive formula, it is possible to obtain the weights for a general reflection case in terms of the weights for the base case $j=0$.

\subsubsection{General dimension, equilateral}

Suppose $\vec{m}=(m, \ldots, m)$ and write
\[G_{n,m}^j=x_1^m+\ldots+x_n^m+x_1^{-m}+\ldots+x_j^{-m}.\]
The polytope $\Delta_{n,m}^j=\Delta(G_{n,m}^j)$ is obtained by reflecting $\Delta^0_{n,m}$ across the hyperplanes $x_i=0$ for $1 \leq i \leq j$, see 
Figure \ref{fig:example2a_corrected}.

\begin{figure}[h]
	\centering
		\includegraphics[scale=.75]{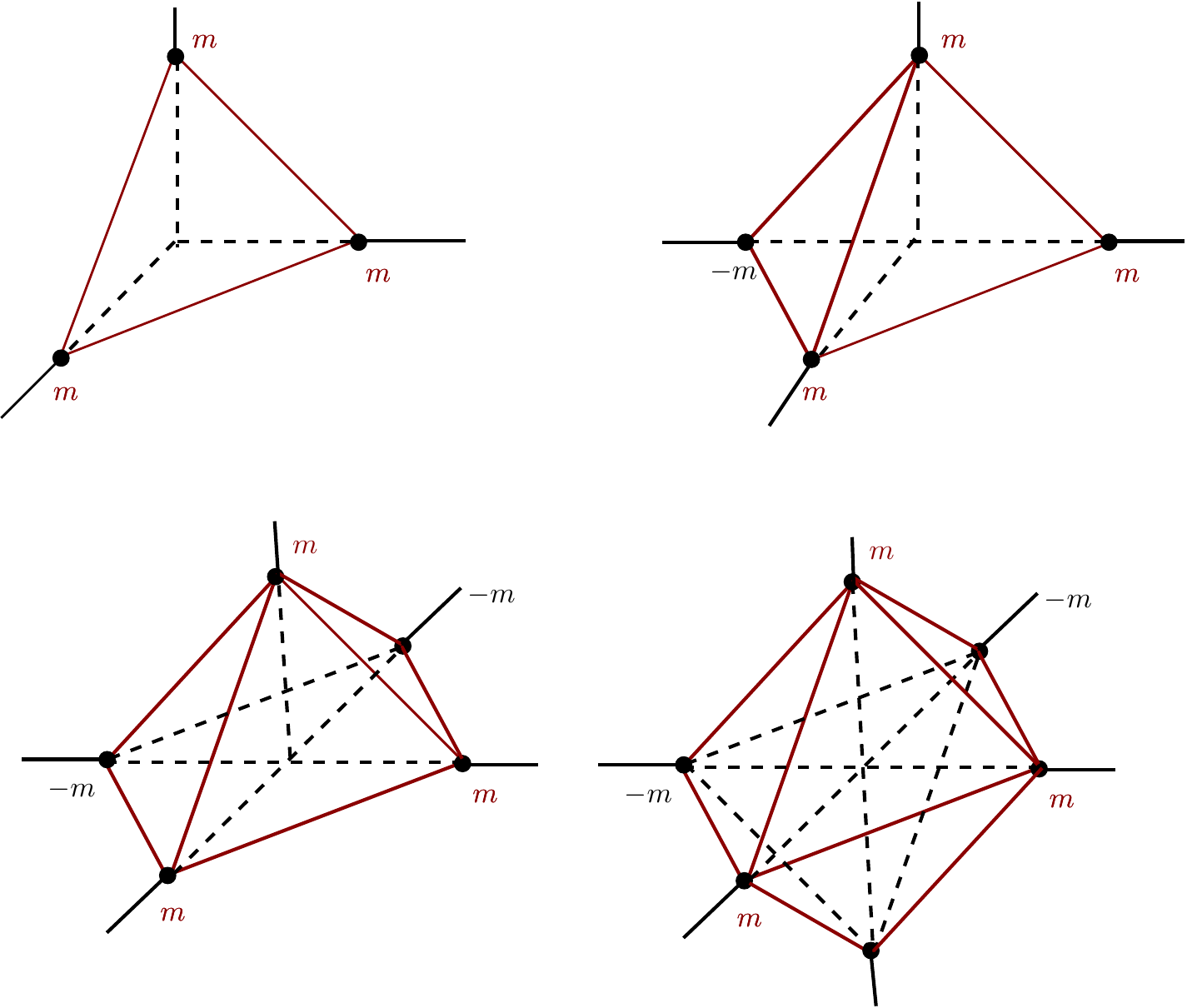}
	\caption{$\Delta_{3,m}^j$ for $0 \leq j \leq 3$}
	\label{fig:example2a_corrected}
\end{figure}

In this case, ${\rm Vol}(\Delta_{n,m}^j)= 2^jm^n/n!$ and Equation (\ref{Erec}) yields the recursive formula
\begin{equation}\label{ReflectionRecursionFormula}
W_{\Delta_{n,m}^j}(k)=2W_{\Delta_{n,m}^{j-1}}(k)-W_{\Delta_{n-1,m}^{j-1}}(k).
\end{equation}

We obtain the following closed form for the weight numbers:

\begin{prop} The weight numbers for $G_{n,m}^j$ are given by:
\[W_{\Delta_{n,m}^j}(k)=\sum\limits_{i=0}^{j} 2^{j-i} (-1)^i  \dbinom{j}{i}W_{\Delta_{n-i,m}^0}(k).\]
\end{prop}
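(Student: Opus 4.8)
The plan is to prove the closed form by induction on $j$, with the dimension $n$ left as a free parameter throughout: this is essential, because the recursion (\ref{ReflectionRecursionFormula}) expresses $W_{\Delta_{n,m}^j}$ in terms of weight numbers for polytopes of \emph{both} dimension $n$ and dimension $n-1$, so one cannot simultaneously induct on $n$. The base case $j=0$ is immediate, since the right-hand sum collapses to its single $i=0$ term $2^{0}(-1)^{0}\binom{0}{0}W_{\Delta_{n,m}^0}(k)=W_{\Delta_{n,m}^0}(k)$.

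For the inductive step, assume the formula holds for $j-1$ and every $n$. Substituting the induction hypothesis into both terms on the right of (\ref{ReflectionRecursionFormula}) gives
\[
W_{\Delta_{n,m}^j}(k)=2\sum_{i=0}^{j-1}2^{j-1-i}(-1)^i\binom{j-1}{i}W_{\Delta_{n-i,m}^0}(k)-\sum_{i=0}^{j-1}2^{j-1-i}(-1)^i\binom{j-1}{i}W_{\Delta_{n-1-i,m}^0}(k).
\]
In the first sum the factor $2$ is absorbed into $2^{j-i}$; in the second sum, replacing the index $i$ by $i-1$ (so the range becomes $1\le i\le j$) rewrites the summand as $2^{j-i}(-1)^{i-1}\binom{j-1}{i-1}W_{\Delta_{n-i,m}^0}(k)$, and the leading minus sign turns $(-1)^{i-1}$ into $(-1)^i$. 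Collecting, the coefficient of $2^{j-i}(-1)^iW_{\Delta_{n-i,m}^0}(k)$ for $0\le i\le j$ is $\binom{j-1}{i}+\binom{j-1}{i-1}$, with the usual convention $\binom{j-1}{-1}=\binom{j-1}{j}=0$. Pascal's rule identifies this with $\binom{j}{i}$, which is exactly the asserted formula, completing the induction.

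Equivalently — and this is the same computation packaged more transparently — one may read (\ref{ReflectionRecursionFormula}) as $W^{j}=(2I-S)W^{j-1}$, where $S$ is the linear "drop a dimension" operator acting on the family $(W_{\Delta_{n,m}^{\bullet}})_{n}$ by $(SW)_{n}=W_{n-1}$; iterating yields $W^{j}=(2I-S)^{j}W^{0}$, and since $2I$ and $S$ commute the binomial theorem gives $\sum_{i=0}^{j}\binom{j}{i}2^{j-i}(-1)^{i}S^{i}W^{0}$, which is the claim once one observes $S^{i}W_{\Delta_{n,m}^0}=W_{\Delta_{n-i,m}^0}$. There is no genuine obstacle in this proof; the only point demanding care is the reindexing of the second sum together with its two boundary terms, which is precisely where Pascal's identity is used.
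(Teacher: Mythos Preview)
Your proof is correct and follows essentially the same approach as the paper: induction on $j$ with $n$ free, substituting the induction hypothesis into both terms of the recursion \eqref{ReflectionRecursionFormula}, reindexing the second sum, and invoking Pascal's identity to combine the coefficients. Your operator reformulation via $(2I-S)^j$ is a pleasant repackaging but not a genuinely different argument.
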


\begin{proof}
First, the formula holds when $j=0$.

To show the formula satisfies the recursion in \eqref{ReflectionRecursionFormula}, we compute
\[
2W_{\Delta_{n,m}^{j-1}}(k)
= \sum\limits_{i=0}^{j-1} 2^{j-i} (-1)^i \dbinom{j-1}{i}W_{\Delta_{n-i,m}^0}(k),\]

and 
\begin{align*}
-W_{\Delta_{n-1,m}^{j-1}}(k)&=\sum_{i=0}^{j-1} 2^{j-1-i} (-1)^{i+1} \dbinom{j-1}{i}W_{\Delta_{n-1-i,m}^0}(k)\\
  &=\sum_{i=0}^{j-1} 2^{j-(i+1)} (-1)^{i+1}  \dbinom{j-1}{i}W_{\Delta_{n-(i+1),m}^0}(k) \\
  &=\sum_{i=1}^{j} 2^{j-i} (-1)^i \dbinom{j-1}{i-1}W_{\Delta_{n-i,m}^0}(k).
\end{align*}

Then $2W_{\Delta_{n,m}^{j-1}}(k)-W_{\Delta_{n-1,m}^{j-1}}(k)$ equals
\begin{align*}
& = \sum\limits_{i=0}^{j-1} 2^{j-i} (-1)^i \dbinom{j-1}{i}W_{\Delta_{n-i,m}^0}(k) + \sum_{i=1}^{j} 2^{j-i} (-1)^i \dbinom{j-1}{i-1}W_{\Delta_{n-i,m}^0}(k)\\
&= 2^j W_{\Delta_{n,m}^0}(k) + \sum_{i=1}^{j-1} 2^{j-i}(-1)^i \left(\dbinom{j-1}{i}+\dbinom{j-1}{i-1}\right) W_{\Delta_{n-i,m}^0}(k) + (-1)^j W_{\Delta_{n-j,m}^0}(k)\\
&=\sum\limits_{i=0}^{j} 2^{j-i} (-1)^i  \dbinom{j}{i}W_{\Delta_{n-i,m}^0}(k)=W_{\Delta_{n,m}^j}(k).
\end{align*}

\end{proof}

\begin{exa} \label{ex.HodgeG}
Weight and Hodge numbers for $G_{2,m}^j$ with $0 \leq j \leq 2$.


\begin{table}[h]
	\centering
		\begin{tabular}{||c||c|c|c|c|c|c|c|c|c||}
\hline
		$k$ & $0$ & $1$ & $\ldots$ & $m-1$ & $m$ & $m+1$ & $\ldots$ & $2m-1$ & $2m$ \\ \hline
		$W(k)$ & $1$ & $2$ & $\ldots$ & $m$  & $m+1$ &  $m+2$ & $\ldots$ & $2m$ & $2m+1$ \\ \hline
		$H(k)$ & $1$ & $2$ & $\ldots$ & $m$  & $m-1$ &  $m-2$ & $\ldots$ & $0$ & $0$\\
		\hline
		\end{tabular}
	\caption{Hodge Numbers for $G_{2,m}^0=x_1^m+x_2^m$}
	\label{HodgeDiag2}
\end{table}

\begin{table}[h]
	\centering
		\begin{tabular}{||c||c|c|c|c|c|c|c|c|c||}
\hline
		$k$ & $0$ & $1$ & $\ldots$ & $m-1$ & $m$ & $m+1$ & $\ldots$ & $2m-1$ & $2m$ \\ \hline
		$W(k)$ & $1$ & $3$ & $\ldots$ & $2m-1$  & $2m+1$ &  $2m+3$ & $\ldots$ & $4m-1$ & $4m+1$ \\ \hline
		$H(k)$ & $1$ & $3$ & $\ldots$ & $2m-1$  & $2m-1$ &  $2m-3$ & $\ldots$ & $1$ & $0$\\
		\hline
		\end{tabular}
	\caption{Hodge Numbers for $G_{2,m}^1=x_1^m+x_2^m+ x_1^{-m}$}
	\label{HodgeG21}
\end{table}

\begin{table}[h]
	\centering
		\begin{tabular}{||c||c|c|c|c|c|c|c|c|c||}
\hline
		$k$ & $0$ & $1$ & $\ldots$ & $m-1$ & $m$ & $m+1$ & $\ldots$ & $2m-1$ & $2m$ \\ \hline
		$W(k)$ & $1$ & $4$ & $\ldots$ & $4(m-1)$  & $4(m+1)$ &  $4(m+3)$ & $\ldots$ & $4(2m-1)$ & $8m$ \\ \hline
		$H(k)$ & $1$ & $4$ & $\ldots$ & $4(m-1)$  & $4m-2$ &  $4(m-1)$ & $\ldots$ & $4$ & $1$\\
		\hline
		\end{tabular}
	\caption{Hodge Numbers for $G_{2,m}^2=x_1^m+x_2^m+x_1^{-m} + x_2^{-m}.$}
	\label{HodgeG22}
\end{table}

\end{exa}

\subsubsection{Dimension two, non-equilateral}



Suppose $n=2$ and ${\rm gcd}(m_1,m_2)=1$.  Let $W^j(k):=W_{\Delta^j_{2, (m_1,m_2)}}(k)$ for $0 \leq j \leq 2$.
Then $W^0(k)$ can be computed using Theorem \ref{explicitformula}.
Equation (\ref{Erec}) gives recursive formulae $W^1(k)=2W^0(k)-1$ and $W^2(k)=2W^1(k)-2=4W^0(k)-4$.

The Hodge numbers are computed in Tables \ref{Tn=2j=1} and \ref{Tn=2j=2}.
Note that the sum of the Hodge numbers in Table \ref{Tn=2j=1} is 
\[\sum\limits_{k=0}^{2m_1m_2}H^1(k)=2m_1{m_2}=2V(\Delta^1_{2,(m_1,{m_2})}),\]
and in Table \ref{Tn=2j=2} is
\[\sum\limits_{k=0}^{2m_1m_2}H^2(k)=4m_1{m_2}=2V(\Delta^2_{2, (m_1,{m_2})}).\]

\begin{table}[h]
	\centering
		\begin{tabular}{||c||c|c|c|c||}
\hline
		$k$ & $0, 1, \ldots, m_1{m_2}-1 $ & $m_1{m_2}$ & $m_1{m_2}+t; 0 < t <m_1{m_2}$ & $2m_1{m_2}$ \\ \hline
		$H^1(k)$ & $2W^0(k)-1$ & $1$ & $3-2W^0(t)$ & $0$ \\
		\hline
		\end{tabular}
	\caption{Hodge Numbers for $G_{2, (m_1,m_2)}^1=x_1^{m_1}+x_2^{m_2}+x_1^{-m_1}$}
	\label{Tn=2j=1}
\end{table}	

\begin{table}[h]
	\centering
		\begin{tabular}{||c||c|c|c|c||}
\hline
		$k$ & $0, 1, \ldots, m_1{m_2}-1 $ & $m_1{m_2}$ & $m_1{m_2}+t; 0 < t <m_1{m_2}$ & $2m_1{m_2}$ \\ \hline
		$H^2(k)$ & $4W^0(k)-4$ & $4$ & $8-4W^0(t)$ & $0$ \\
		\hline
		\end{tabular}
	\caption{Hodge Numbers for $G_{2, (m_1,m_2)}^2=\sum_{i=1}^2(x_i^{m_i}+x_i^{-m_i})$}
	\label{Tn=2j=2}
\end{table}

\subsection{Kloosterman variant Laurent polynomials} \label{Sklo}

Fix $n \in \NN$, $\vec{m} = (m_1, \ldots, m_n) \in \NN^n$ and $1 \leq j \leq n$.  
In this section, let $\Delta$ denote the polytope of the Laurent polynomial 
\[K_{n,\vec{m}}^j=x_1^{m_1} + \cdots + x_n^{m_n} + (x_1 \cdots x_j)^{-1}.\]

\begin{figure}[h]
	\centering
		\includegraphics{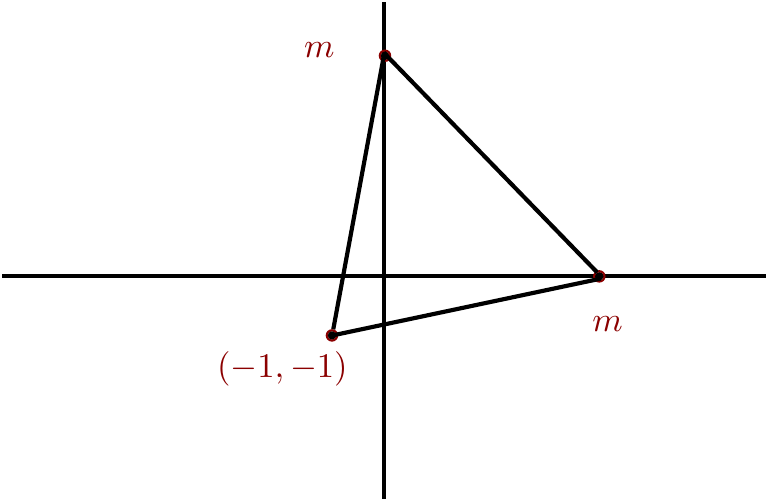}
	\caption{The polytope $\Delta(K^2_{2,(m,m)})$}
	\label{fig:example3}
\end{figure}

The vertices of $\Delta$ are $-\vec{1}_j:=-\sum_{i=1}^j e_i = (-1, \ldots,-1, 0, \ldots,0)$ and $v_1, \ldots, v_n$.
The cone is $c(\Delta) = \{(x_1, \ldots, x_n) \in \R^n \mid x_i \geq 0, \ j+1 \leq i \leq n\}$.

The vectors with initial point $-\vec{1}_j$ along the edges of $\Delta$ are, for $1\leq \ell \leq n$,
\[w_\ell = \sum_{i=1}^j e_j + m_\ell e_\ell.\]
The volume of the polytope $\Delta=\Delta(K^j_{n,\vec{m}})$ is $V(\Delta)=\frac{1}{n!} \det(w_1, \ldots, w_n)$.
Write $s_k$ for the $k$th symmetric product in $m_1, \ldots, m_j$.  Then 
\[V(\Delta(K_{n, \vec{m}}^j))=[s_j + \sum_{i=1}^{j-1} (-1)^i i s_{j-1-i}]\prod_{i={j+1}}^n m_i/n!.\]
The denominator of $\Delta$ is $D={\rm LCM}(m_1, \ldots, m_n)$.

\begin{lem} \label{LdetK}
\begin{enumerate}
\item Suppose $1 \leq \ell \leq j$.  Let $\delta_\ell$ be the face of $\Delta$ containing the vertices $-\vec{1}_j$ and $v_i$ for $1 \leq i \leq n$ and $i \not = \ell$.
Then $\delta_\ell$ is contained in the hyperplane: 
\[\sum_{i \not = \ell} \frac{1}{m_i} x_i - (1 + \sum_{i \not = \ell} \frac{1}{m_i})x_{\ell}=1.\]
\item The other faces of $\Delta$ are contained in the hyperplanes $\sum_{i=1}^n \frac{1}{m_i} x_i =1$ and $x_i=0$ for $j+1 \leq i \leq n$.
\end{enumerate}
\end{lem}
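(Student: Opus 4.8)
The plan is to identify each claimed hyperplane by exhibiting an affine functional that vanishes appropriately and takes the value $1$ on every vertex of the face in question, then invoke uniqueness of the hyperplane spanned by $n$ affinely independent points. For part (1), fix $1 \leq \ell \leq j$ and consider the face $\delta_\ell$ with vertices $-\vec{1}_j$ and $v_i = m_i e_i$ for $i \neq \ell$. First I would write down the candidate linear functional
\[
L(x) = \sum_{i \neq \ell} \frac{1}{m_i} x_i - \Bigl(1 + \sum_{i \neq \ell} \frac{1}{m_i}\Bigr) x_\ell
\]
and simply evaluate: for $i \neq \ell$ we get $L(v_i) = L(m_i e_i) = \frac{1}{m_i} \cdot m_i = 1$, and for the vertex $-\vec{1}_j = -\sum_{r=1}^j e_r$ we compute $L(-\vec{1}_j) = -\sum_{i \neq \ell,\ i \leq j} \frac{1}{m_i} + \bigl(1 + \sum_{i \neq \ell} \frac{1}{m_i}\bigr)$; the point is that the coordinate of $-\vec{1}_j$ in the $x_\ell$-direction is $-1$ (since $\ell \leq j$), which is exactly what is needed to cancel the $-\sum$ terms and leave $1$. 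I would note the bookkeeping subtlety that $-\vec{1}_j$ has zero entries in positions $j+1, \ldots, n$, so only indices $i \leq j$ contribute to the first sum — but these extra terms wash out against the corresponding terms in the parenthesized coefficient, leaving $L(-\vec{1}_j) = 1$. Since the $n$ points $\{-\vec{1}_j\} \cup \{v_i : i \neq \ell\}$ are affinely independent (the $v_i$ are multiples of distinct basis vectors and $-\vec{1}_j$ lies off their span), they determine a unique hyperplane, which must therefore be $\{L = 1\}$.

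For part (2), I would enumerate the remaining facets of $\Delta$. The polytope $\Delta$ is the convex hull of $\vec 0$... — no: $\Delta$ is the convex hull of the vertices $-\vec{1}_j, v_1, \ldots, v_n$ and $\vec 0$ lies in its interior or on a face; more precisely, following the setup, $\Delta$ is generated by the origin together with these points, so its facets not through $\vec{0}$ are exactly $\delta_1, \ldots, \delta_j$ from part (1) together with the ``top'' facet through all of $v_1, \ldots, v_n$, while the facets through $\vec 0$ are coordinate hyperplanes $x_i = 0$. For the top facet I would check that $\sum_{i=1}^n \frac{1}{m_i} x_i$ evaluates to $1$ at each $v_i = m_i e_i$, giving the hyperplane $\sum \frac{1}{m_i} x_i = 1$. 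For the coordinate facets, I would use the description of the cone $c(\Delta) = \{x : x_i \geq 0,\ j+1 \leq i \leq n\}$ already recorded before the lemma: the bounding hyperplanes of this cone are precisely $x_i = 0$ for $j+1 \leq i \leq n$, and these cut out faces of $\Delta$ containing $\vec{0}$. I would briefly argue there are no other facets by a dimension/vertex count: $\Delta$ has $n+1$ vertices $\{-\vec 1_j, v_1, \dots, v_n\}$, and each facet omits exactly one of them, giving the $j$ faces $\delta_\ell$ (omitting $v_\ell$, $\ell \le j$) plus $n+1-j$ more faces corresponding to omitting $v_\ell$ for $\ell > j$ or omitting $-\vec 1_j$; matching these against the listed coordinate hyperplanes $x_i = 0$ ($j+1 \le i \le n$, that is $n-j$ of them) and the single hyperplane $\sum \frac{1}{m_i}x_i = 1$ accounts for all of them.

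The main obstacle I expect is purely organizational rather than deep: being careful about which vertices lie in which facet, and in particular verifying the sign cancellation in $L(-\vec 1_j)$ for part (1) — the interaction between the truncation of $-\vec 1_j$ to its first $j$ coordinates and the index $\ell$ ranging only over $1 \le \ell \le j$ is where an off-by-one or a dropped term would creep in. A secondary, milder point is justifying that the enumerated hyperplanes really are all the facets (and that each is a facet, i.e.\ genuinely codimension one and supporting), for which I would lean on the explicit vertex list and the cone description already established in the text. No new machinery beyond elementary affine geometry is needed; the lemma is a direct computation dressed up for later use in Corollary \ref{CreflectionNewton} and the volume formula.
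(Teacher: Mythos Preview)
Your approach---direct verification by evaluating the candidate linear functional at each vertex, then invoking affine independence---is the right one, and the paper itself states this lemma without proof, so there is nothing to compare against on that front. Part (2) is fine: $\Delta$ is an $n$-simplex on the $n+1$ vertices $-\vec{1}_j, v_1, \ldots, v_n$, and your facet enumeration by omitting one vertex at a time matches the listed hyperplanes.

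However, there is a genuine arithmetic slip in part (1), precisely at the spot you flagged as the likely danger zone. You write that the extra terms ``wash out,'' but they do not. Carrying out your computation:
\[
L(-\vec{1}_j) \;=\; -\sum_{\substack{i \leq j \\ i \neq \ell}} \frac{1}{m_i} \;+\; \Bigl(1 + \sum_{i \neq \ell} \frac{1}{m_i}\Bigr) \;=\; 1 + \sum_{i > j} \frac{1}{m_i},
\]
which equals $1$ only when $j = n$. The parenthesized coefficient carries the terms $\sum_{i > j} \tfrac{1}{m_i}$, and nothing in the first sum cancels them, since $-\vec{1}_j$ has zero in those coordinates.

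What this actually uncovers is a typo in the lemma as stated: the coefficient of $x_\ell$ should be $-\bigl(1 + \sum_{i \leq j,\, i \neq \ell} \tfrac{1}{m_i}\bigr)$, with the inner sum restricted to $i \leq j$. You can see this by solving directly for $c_\ell$ from the constraint $-\sum_{i=1}^{j} c_i = 1$ with $c_i = 1/m_i$ for $i \neq \ell$. The corrected form is in fact what the paper uses downstream: in the equilateral Proposition the face through $-\vec{1}_j$ and $v_2, \ldots, v_n$ is written as $\tfrac{1}{m}\sum_{i=2}^n x_i - \tfrac{m+(j-1)}{m} x_1 = 1$, and $\tfrac{m+(j-1)}{m} = 1 + \sum_{i \leq j,\, i \neq 1} \tfrac{1}{m}$, consistent with the corrected coefficient rather than the one printed in the lemma. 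So your method is sound; you just need to redo the check against the corrected statement.
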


\subsubsection{General dimension, equilateral case}

Suppose $\vec{m}=(m_1, \ldots, m_n)$ and write $K_{n, m}^j:= K_{n, \vec{m}}^j$.

\begin{prop}
For $0 \leq k \leq nm$, the weight numbers for $K_{n,m}^j$ are:
\[W(k)=\dbinom{n-1+k}{n-1} + \sum_{s=1}^{j}\beta(j,s)\sum_{\ell=1}^n\dbinom{k-\ell m+(n-j-1)}{n-s-1}+\alpha(j,k),\]
where $\beta(j,s) =\dbinom{j}{s}$ unless $j=s=n$ in which case $\beta(n,n)=0$
and $\alpha(j,k) =0$ unless $j=n$ and $0 < k \equiv 0 \bmod m$ in which case $\alpha(j,k)=1$.
\end{prop}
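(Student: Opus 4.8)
The plan is to compute $W(k)$ by combining the decomposition of the cone $c(\Delta)$ over the origin with elementary lattice-point counts, treating one degenerate face separately.

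First I would fix the geometry of $\Delta = \Delta(K_{n,m}^j)$. It is the $n$-simplex with vertices $-\vec{1}_j, v_1, \dots, v_n$, and its facets not containing $\vec{0}$ are the diagonal facet $\delta_0 = \mathrm{conv}(v_1,\dots,v_n)$ together with $\delta_\ell = \mathrm{conv}(-\vec{1}_j,\, v_1,\dots,\widehat{v_\ell},\dots,v_n)$ for $1 \leq \ell \leq j$. Subdividing $c(\Delta)$ from the origin produces simplicial cones $C_0$ (the first orthant, the cone over $\delta_0$) and $C_\ell$ (the cone over $\mathrm{conv}(\vec{0},\delta_\ell)$). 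Using Lemma \ref{LdetK} for the hyperplane equations of the $\delta_\ell$ — or, equivalently, reading off barycentric coordinates with respect to the generators of each $C_\ell$ — I would verify the single uniform formula
\[
w(u) \;=\; \tfrac1m\sum_{i=1}^{n} u_i \;-\; \Bigl(1 + \tfrac{j}{m}\Bigr)\min(0, u_1, \dots, u_j),
\]
valid for every lattice point $u$ of $c(\Delta)$: on $C_0$ the minimum is $0$, while on $C_\ell$ it is $u_\ell \leq 0$.

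Next I would count lattice points $u \in c(\Delta)$ with $w(u) = k/m$ by partitioning according to $\mu := \min(0, u_1, \dots, u_j)$. When $\mu = 0$ all coordinates are nonnegative and the weight condition becomes $\sum_{i=1}^n u_i = k$, giving $\binom{n-1+k}{n-1}$ points; this is the first term. When $\mu = -a$ with $a \geq 1$, I would further partition by the set $S = \{\, i \leq j : u_i = -a \,\}$ of size $s \geq 1$ and substitute $u_i = -a + b_i$ for $i \leq j$ (so $b_i = 0$ precisely on $S$, $b_i \geq 1$ off $S$, $b_i \geq 0$ for $i > j$). The weight condition then becomes a prescribed-sum condition on the $b_i$, and a stars-and-bars count, together with the $S_j$-symmetry of $\Delta$ in the first $j$ coordinates, shows that each pair $(a,S)$ with $s < n$ contributes $\binom{j}{s}\binom{k-am+n-j-1}{n-s-1}$. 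Summing over $a$ and $s$ and then relabelling $a$ as $\ell$ while enlarging its range to $\{1,\dots,n\}$ (legitimate because $k \leq nm$ forces every surviving term to have $a \leq n$, and binomials with negative top vanish) yields the asserted formula with $\beta(j,s)=\binom{j}{s}$.

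The only delicate point — and the reason for the corrections $\beta(n,n)=0$ and $\alpha(j,k)$ — is the deepest face, $s = j = n$. There the count in the previous paragraph is a count of an integer vector with $n-s=0$ free coordinates and prescribed sum $k - am$, which equals $1$ when $k = am$ and $0$ otherwise; this is not what the symbol $\binom{k-am-1}{-1}$ records under the convention $\binom{r}{t}=0$ for $r<t$. Summing $[\, k = am \,]$ over $a \geq 1$ gives exactly $\alpha(j,k) = [\, k>0 \text{ and } m \mid k \,]$, so one discards the spurious $s=j=n$ term by declaring $\beta(n,n)=0$ and adds $\alpha(j,k)$ back explicitly. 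I expect this boundary bookkeeping, rather than any individual counting step, to be the only part demanding attention; the rest is the routine verification of the uniform weight formula and of the stars-and-bars identities.
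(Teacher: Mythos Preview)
Your proposal is correct and follows essentially the same route as the paper: separate off the first-orthant lattice points, then partition the remaining points by the value $a=-\min(0,u_1,\dots,u_j)$ and by the number $s$ of coordinates achieving that minimum, apply a stars-and-bars count, and treat the degenerate case $s=j=n$ (the multiples of $-\vec 1_j$) by hand. Your unified weight formula $w(u)=\tfrac1m\sum_i u_i-(1+\tfrac jm)\min(0,u_1,\dots,u_j)$ is a tidy repackaging of the per-face hyperplane equations the paper uses, but the resulting count and the handling of $\beta(n,n)$ and $\alpha(j,k)$ match the paper's argument step for step.
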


\begin{proof}
The lattice points $\{(x_1, \dots, x_n)\in \Z^n \mid x_i \geq 0\}$ have the same weight as in the diagonal case.
This contributes $\dbinom{n-1+k}{n-1}$ to $W(k)$.

Thus it suffices to consider the weight of $\vec{x}=(x_1, \dots, x_n)$ when at least one coordinate is negative.   
By symmetry, it suffices to first focus on the points $\vec{x}$ closest to the face $\delta$
of $\Delta$ containing the vertices $-\vec{1}_j$ and $v_i$ for $2 \leq i \leq n$.
This face is contained in the hyperplane
\[\frac{1}{m} \sum_{i=2}^n x_i -  \frac{m+(j-1)}{m}x_{1} = 1.\]
These points satisfy the conditions:
$x_1 <0$ and $x_i \geq x_1$ for $2 \leq i \leq j$, and $x_i \geq 0$ for $j+1 \leq i \leq n$.  

The condition $k \leq nm$ implies that $x_1 \in \{-1, \dots, -n\}$.  Fix $-\ell\in \{-1, \dots, -n\}$ and let $x_1 = -\ell$.
First suppose $x_i > x_1$ for all $2 \leq i \leq j$.  The smallest weight $k$ possible for this set of points is
\[
-(j-1)(\ell-1) + (m+j-1)(\ell) +(n-j)(0)= m\ell+j-1,
\]
occurring when $x_i = -(\ell-1)$ for $2 \leq i \leq j$ and $x_i=0$ for $j+1 \leq i \leq n$.  
To increase this value to $k$, one needs to add a combined total of $k - (m\ell+j-1)$ to $\{x_i \mid i \geq 2\}$.  
For $1\leq \ell \leq n$, there are 
\[
\dbinom{k - (m\ell+j-1)+(n-2)}{n-2}=\dbinom{k - m\ell+n-j-1}{n-2}
\]
ways to do this, which is the number of
points of weight $k$ with $x_1 = -\ell$,  $x_i > x_1$ for $2 \leq i \leq j$ and $x_i \geq 0$ for $j+1 \leq i \leq n$.

Next, let $2 \leq s \leq j$ and suppose $\#\{i \leq j \mid x_i = -\ell\}=s$.  Recall that $-\ell \in \{-1, \dots, -n\}$.
(This is the case where $\vec{x}$ is equidistant to more than one face of $\Delta$ containing $-\vec{1}_j$.)  
For ease of notation, suppose $x_i = - \ell$ for $1 \leq i \leq s$.  
Recall that $-\ell \in \{-1, \dots, -n\}$, and $x_i \geq -(\ell-1)$ for $s+1 \leq i \leq j$ and $x_i >0$ for $j+1 \leq i \leq n$.  
The smallest weight $k$ possible for this set of points is
\[
-(j-s)(\ell-1) -(s-1)(\ell) + (m+j-1)(\ell) +0(n-j)= m\ell+j-s,
\]
occurring when $x_i = -(\ell-1)$ for $s+1 \leq i \leq j$ and $x_i=0$ for $j+1 \leq i \leq n$.  
To increase this value to $k$, one needs to add a combined total of $k - (m\ell+j-s)$ to $\{x_i \mid i \geq s+1\}$.  
Thus, for $1\leq \ell \leq n$, outside the case $s=j=n$, there are 
\[
\dbinom{k - (m\ell+j-s)+(n-s-1)}{n-s-1}=\dbinom{k - m\ell+n-j-1}{n-1-s}
\]
ways to do this, which is the number of
points of weight $k$ with $x_i = - \ell$ for $1 \leq i \leq s$, and $x_i> -\ell$ for $s+1 \leq i \leq j$, and $x_i \geq 0$ for $j+1 \leq i \leq n$.
Let $C_s(k)$ denote the set of lattice points $\vec{x}$ of weight $k$ such that 
$\#\{i \mid x_i ={\rm min}(x_1, \ldots, x_n)\}=s$.
The conclusion is that, outside the case $s=j=n$,
\[
\#C_s(k)=\dbinom{j}{s}\sum_{\ell=1}^{n}\dbinom{k - m\ell+n-j-1}{n-s-1}.
\]

If $s=j=n$, none of the sets $C_s$ include the points $\vec{x}$ which are a multiple of $-\vec{1}_j$.  
There is one such point of weight $m\ell$ for each $1\leq \ell \leq n$.  
This contributes one point of weight $k$ only when $0 < k \equiv 0 \bmod m$.
This is accounted for by the definitions of $\beta(j,s)$ and $\alpha(j,k)$.
\end{proof}


\begin{exa}\label{ex.K2} 
Let $n=2$. The difference between the number of lattice points of weight $k/m$ for $K^2_{2,m}$ and $G^0_{2,m}$ is zero if $0 \leq k < m$,
is one if $k=m$, is two if $m < k < 2m$, and is three if $k=2m$.

\begin{table}[h]
	\centering
		\begin{tabular}{||c||c|c|c|c|c|c|c|c|c|c||}
\hline
		$k$ & $0$ & $1$ & $\ldots$ & $m-1$ & $m$ & $m+1$ & $m+2$ &$\ldots$ & $2m-1$ & $2m$ \\ \hline
		$W(k)$ & $1$ & $2$ & $\ldots$ & $m$  & $m+2$ & $m+4$ &  $m+5$ & $\ldots$ & $2m+2$ & $2m+4$ \\ \hline
		$H(k)$ & $1$ & $2$ & $\ldots$ & $m$  & $m$ &  $m$ & $m-1$ & $\ldots$ & $2$ & $1$\\
		\hline
		\end{tabular}
	\caption{Hodge Numbers for $K^2_{2,m}=x_1^{m}+x_2^m+(x_1x_2)^{-1}$}
	\label{HodgeK2}
\end{table}
\end{exa}

\begin{exa}\label{ex.K3}
Let $n=3$.  Table \ref{HodgeK3} shows the difference $\tau(k,m)$ between the number of lattice points of weight $k/m$ for $K^3_{3,m}$ and $G^0_{3,m}$.
\begin{table}[h]
	\centering
		\begin{tabular}{||c||c|c|c|c|c|c||}
\hline
$k$ & $0 \leq k < m$ & $k=m$ & $k=m+\epsilon$ & $k=2m$ & $k=2m + \epsilon$ & $k=3m$\\
& & & $0 < \epsilon < m$ & & $0 < \epsilon < m$ &\\
\hline
$\tau(k,m)$ & $0$ & $1$ & $3\epsilon$ & $3m+1$ & $3m + 6\epsilon$ & $9m+1$\\
\hline
	\end{tabular}
	\caption{The difference between $W_{K^3_{3,m}}(k)$ and $W_{G^0_{3,m}}(k)$}
	\label{HodgeK3}
\end{table}
\end{exa}

\subsubsection{Dimension two, non-equilateral case} Let $n=2$ and ${\rm gcd}(m_1,m_2)=1$.
Recall that one can explicitly compute the weight number for the diagonal polytope $x_1^{m_1} + x_2^{m_2}$ from Theorem \ref{explicitformula} and Table \ref{Tadiag}.
In this section, we denote that weight number by $d(k)$.

Let $\Delta^2$ denote the polytope of $K^2_{2,(m_1,m_2)} = x_1^{m_1}+x_2^{m_2}+(x_1x_2)^{-1}$.
It has vertices at $(m_1,0),(0,m_2),$ and $(-1,-1)$ and denominator $D =m_1m_2$.
Similarly, let $\Delta^1$ denote the polytope of $K^1_{2,(m_1,m_2)} = x_1^{m_1}+x_2^{m_2}+(x_1)^{-1}$
which has vertices at $(m_1,0), (0,m_2)$ and $(-1,0)$ and denominator $D =m_1m_2$.

We compute the weight numbers $W(k)$ for $K^j_{2,(m_1,m_2)}$ for $j = 1,2$.

\begin{prop}
Let $d(k)$ denote the weight number for the diagonal polytope $x_1^{m_1} + x_2^{m_2}$. 
For $0 \leq k \leq 2m_1m_2$ and $1 \leq j \leq 2$, the weight numbers for $K^j_{2,(m_1,m_2)}$ are
\[W_j(k)=d(k) + 
\begin{cases}
1+j& \text{ if } k=2m_1m_2,\\
1& \text{ if } m_1m_2 \leq k < 2m_1m_2 \text{ and } {\rm gcd}(k, m_1m_2)>1,\\
0& \text{ otherwise.} \\
\end{cases}
\]
\end{prop}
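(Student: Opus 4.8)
The plan is to compute $W_j(k)$ directly from the definition by decomposing the cone $C(\Delta^j)$ along the rays through the vertices of $\Delta^j$, so that on each piece the weight $w_{\Delta^j}$ is a single linear form obtained from the corresponding face equation, and then counting lattice points of weight $k/(m_1m_2)$ piece by piece. The faces of $\Delta^j$ not containing $\vec{0}$ are, for $\Delta^2 = \Delta(K^2_{2,(m_1,m_2)})$, the ``diagonal'' face with equation $\frac{x_1}{m_1}+\frac{x_2}{m_2}=1$ (whose cone is the first quadrant) together with the two faces through the vertex $-\vec{1}_2=(-1,-1)$, whose hyperplanes are provided by Lemma~\ref{LdetK}; and for $\Delta^1 = \Delta(K^1_{2,(m_1,m_2)})$, the diagonal face together with the single face through $(-1,0)$, which has equation $-x_1+\frac{x_2}{m_2}=1$ and cone equal to the second quadrant. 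In each case $C(\Delta^j)$ is the union of the cones over these faces, and two distinct such cones meet only along a ray spanned by a common vertex of $\Delta^j$.

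On the cone over the diagonal face the weight of $u=(u_1,u_2)$ is $\frac{u_1}{m_1}+\frac{u_2}{m_2}$, exactly as for the diagonal polytope $\Delta(x_1^{m_1}+x_2^{m_2})$; hence the lattice points with $u_1,u_2\geq 0$ contribute precisely $d(k)$, and it remains to count lattice points of weight $k/(m_1m_2)$ having a negative coordinate. For each non-diagonal face, substituting its equation into $w_{\Delta^j}(u)=k/(m_1m_2)$ and clearing denominators, and writing the negative coordinate(s) as $-\ell$ with $\ell\geq 1$, yields a linear Diophantine equation whose solvability forces a divisibility condition on $k$ (by $m_2$ for one face of $\Delta^2$, by $m_1$ for the other face of $\Delta^2$ and for the non-diagonal face of $\Delta^1$) and whose solutions inside the cone are in bijection with the admissible values of $\ell$. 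The constraint $0\leq k\leq 2m_1m_2$ leaves only $\ell=1$ when $m_1m_2\leq k<2m_1m_2$ and $\ell\in\{1,2\}$ when $k=2m_1m_2$, each admissible $\ell$ contributing one lattice point. For $j=2$ one then applies inclusion--exclusion: the points $(-\ell,-\ell)$ on the ray through $(-1,-1)$ belong to both non-diagonal cones and are counted twice, which happens exactly when $m_1m_2\mid k$, so they must be subtracted once. Reassembling these counts, and using $\gcd(m_1,m_2)=1$ to combine the separate divisibility conditions together with the overlap correction into the single criterion ``$\gcd(k,m_1m_2)>1$'' on the range $m_1m_2\leq k<2m_1m_2$, gives the stated formula; the value $1+j$ at $k=2m_1m_2$ records the two layers $\ell=1,2$ (less the overlap correction when $j=2$).

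The main obstacle is the combinatorial bookkeeping in the off-diagonal cones: reading the weight function off each non-diagonal face correctly (via Lemma~\ref{LdetK} for $\Delta^2$), keeping track of the inequalities cutting out each cone when enumerating the solutions with negative coordinates, and --- the most delicate point --- reconciling the face-by-face divisibility conditions and the overlap corrections with the uniform case distinction in the statement. Everything else --- recognizing the first-quadrant contribution as $d(k)$, bounding the layer index $\ell\in\{1,2\}$ via $k\leq 2m_1m_2$, and solving each linear Diophantine equation --- is routine once the cone decomposition is in place.
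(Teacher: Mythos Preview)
Your approach is the paper's approach: split off the first-quadrant lattice points (which contribute exactly $d(k)$) and then enumerate the remaining lattice points with a negative coordinate by reading their weight off the relevant face equation (Lemma~\ref{LdetK} for $j=2$, the line $-x_1+x_2/m_2=1$ for $j=1$), bounding the layer index $\ell$ by $k/(m_1m_2)\le 2$. Your identification of the resulting divisibility constraints --- $m_2\mid k$ from the face through $(-1,-1)$ and $(m_1,0)$, $m_1\mid k$ from the face through $(-1,-1)$ and $(0,m_2)$, and $m_1\mid k$ from the single non-diagonal face when $j=1$ --- is correct, as is the inclusion--exclusion along the ray through $(-1,-1)$ for $j=2$.

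The gap is the very last step, where you assert that ``$\gcd(m_1,m_2)=1$'' lets you combine these into the single criterion $\gcd(k,m_1m_2)>1$. It does not. For $j=2$ your argument actually yields the condition ``$m_1\mid k$ or $m_2\mid k$'', and for $j=1$ it yields just ``$m_1\mid k$''; neither is equivalent to $\gcd(k,m_1m_2)>1$ once $m_1$ or $m_2$ fails to be prime. Concretely, with $m_1=4$, $m_2=3$ (so $m_1m_2=12$): for $j=2$ and $k=14$ one has $\gcd(14,12)=2>1$ but $3\nmid14$ and $4\nmid14$, and indeed there is no lattice point of weight $14/12$ with a negative coordinate, so $W_2(14)=d(14)$; for $j=1$ and $k=15$ one has $\gcd(15,12)=3>1$ but $4\nmid15$, and again $W_1(15)=d(15)$. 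The paper's own proof stops at the explicit list of off-quadrant points and their weights $k=m_1m_2+(\ell+1)m_2$, $k=m_1m_2+(\ell+1)m_1$, $k=m_1m_2+m_1\ell$ and never carries out this ``combining'' step either; the discrepancy lies in the statement of the proposition, and what your analysis (and the paper's) in fact establishes is the formula with ``$m_1\mid k$ (or, when $j=2$, $m_2\mid k$)'' in place of ``$\gcd(k,m_1m_2)>1$''.
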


\begin{proof}
The contribution to $W_j(k) - d(k)$ comes from points $(x_1,x_2)$ with at least one negative coordinate. 
Then $x_1, x_2 \geq -2$ since $k \leq 2m_1m_2$. 
 
When $j=2$, there are $2 + m_1 +m_2$ new lattice points having at least one negative coordinate: $(-1,-1)$ with weight 1, $(-2,-2)$ with weight 2, 
$(-1, \ell)$ for $0 \leq \ell < m_2$ and $(\ell, -1)$ for $0 \leq \ell < m_1$.
The equation of the face $\delta$ through the vertices $(-1,-1)$ and $(m_1, 0)$ is $\frac{1}{m_1} x_1 - \frac{m_1+1}{m_1} x_2 =1$.
Using this, the weight of $(\ell, -1)$ is $k/m_1m_2$ with $k=m_1m_2+(\ell+1)m_2$.
Similarly, the weight of $(-1, \ell)$ is $k/m_1m_2$ with $k = m_1m_2 + (\ell+1)m_1$.

When $j=1$, there are exactly $m_{2}+2$ new lattice points having at least one negative coordinate: $(-1,0)$ with weight 1, $(-2,0)$ with weight 2, 
and the points $(-1,\ell)$ for $1 \leq \ell \leq m_2$.  Using the equation $-x_1+\frac{1}{m_2} x_2 = 1$ of the corresponding face $\delta$, 
the weight of $(-1,\ell)$ is $k/m_1m_2$ with $k=m_1m_2+m_1\ell$.
\end{proof}

\bibliographystyle{plain}	
\bibliography{WINZetaHodgefinal}		

\end{document}